\newtheorem*{rep@theorem}{\rep@title}
\newcommand{\newreptheorem}[2]{%
\newenvironment{rep#1}[1]{%
 \def\rep@title{#2 \ref{##1}}%
 \begin{rep@theorem}}%
 {\end{rep@theorem}}}
\numberwithin{equation}{section}
\theoremstyle{plain}
\newtheorem{theorem}{Theorem}[section]
\newtheorem{proposition}[theorem]{Proposition}
\newtheorem{lemma}[theorem]{Lemma}
\newtheorem{corollary}[theorem]{Corollary}
\theoremstyle{definition}
\newtheorem{definition}[theorem]{Definition}
\newtheorem{remark}[theorem]{Remark}
\begin{document}
\title{On the Rationality of moduli spaces of vector bundles over chain-like curves}
\author{Suhas B. N., Praveen Kumar Roy and Amit Kumar Singh}
\address{Department of Mathematics, St. Joseph’s College (Autonomous), Bangalore, India}
\email{chuchabn@gmail.com / suhas.b.n@sjc.ac.in}
\address{School of Mathematics, Tata Institute of Fundamental Research, Mumbai 400005, India }
\email{praveen.roy1991@gmail.com}
\address{Chennai Mathematical Institute, H1 SIPCOT IT Park, Siruseri, Kelambakkam 603103, India}
\email{amitksingh@cmi.ac.in/ amitks.math@gmail.com}
\keywords{Moduli spaces, Rationality, Chain-like curves}
\subjclass[2010]{14D20, 14E08} 
\maketitle

\begin{abstract} Let $C$ be a chain-like curve over $\mathbb{C}$. In this paper, we investigate the rationality of moduli spaces of $w$-semistable vector bundles on $C$ of arbitrary rank and fixed determinant by putting some restrictions on the Euler characteristics. 
\end{abstract}

\section{Introduction}
The moduli space of vector bundles (of fixed rank and degree) on curves is one of the most important and well explored topics in algebraic geometry. Over a smooth projective curve $C$ of genus $g$, it is not possible to parametrize the set of all isomorphism classes of vector bundles of a fixed rank and degree by means of an algebraic variety or a scheme. To address this issue, David Mumford introduced the concept of stable and semistable vector bundles and proved that over a smooth projective curve $C$ of genus $g\geq 2$, the isomorphism classes of stable bundles have the structure of a smooth quasi-projective variety \cite{Mumford-GIT-1965}. A natural compactification of this variety was then given by Seshadri by first introducing the S-equivalence relation on semistable vector bundles and then including the S-equivalence classes of semistable bundles in the moduli space. This compactified moduli space of semistable bundles of rank $r$ and degree $d$, denoted by $\mathcal{U}_C(r,d)$, turns out to be a normal irreducible projective variety of dimension $r^2(g-1)+1$. When $(r,d)=1$, the notion of semistability coincides with that of stability, and so in this case, $\mathcal{U}_C(r,d)$ becomes a smooth projective variety. One also has a canonical determinant map $\det : \mathcal{U}_C(r,d) \rightarrow \mathcal{U}_C(1,d)$, which sends an equivalence class $[E] \in \mathcal{U}_C(r,d)$ to its determinant class $[\Lambda^rE] \in \mathcal{U}_C(1,d)$. The fibre under this map of any arbitrary element $L \in \mathcal{U}_C(1,d)$ is denoted by $\mathcal{SU}_C(r,L)$, and is called a fixed determinant moduli space.
 
 The question of rationality of the fixed determinant moduli space of stable vector bundles over smooth projective curves have been quite well explored. In 1966, Tjurin  first proved that the moduli space $\mathcal{SU}_{C}(2, L)$ is a rational variety, for any line bundle $L$ on $C$ of odd degree \cite[Theorem 14]{Tjurin-1966}. Subsequently in 1975, Newstead generalised the result of Tjurin and proved that $\mathcal{SU}_{C}(r, L)$ is a rational variety provided $(r, d)= 1$ and some other conditions hold. His technique was to construct a family of vector bundles of the required type, parametrized by a Zaraski open subset $S$ of an affine space such that the induced map from $S$ to $\mathcal{U}_C(r, L)$ is birational  \cite{Newstead-1975,Newstead-1980}. After a couple of decades, it was proved by King and Schofield in \cite{King-Schofield-1999} that $\mathcal{U}_C(r , d)$ is birationally equivalent to $\mathcal{U}_C(h, 0) \times \mathbb{A}^{(r^2 - h^2)(g-1)}$, where $h=(r, d)$.  Further, they also proved that $\mathcal{SU}_C(r , L)$ is a rational variety when $(r, d)=1$, thereby settling the rationality question completely for the case when rank and degree are coprime. Nevertheless, the question is still mostly open for the non-coprime case. Narasimhan and Ramanan have proved the rationality of $\mathcal{SU}_C(2, L)$ when the curve $C$ is of genus 2 and the line bundle $L$ is of even degree \cite{Narasimhan-Ramanan-1969}.
 
 In the case when the base curve $C$ is an irreducible nodal curve of arithmetic genus $p_a(C) \geq 2$, one has to include torsion-free sheaves and not just vector bundles in order to get a compact moduli space (cf. \cite[Chapter VII]{Seshadri-1982}, \cite[Chapter V]{Newstead-2011}). Let $\mathcal{U}_C(r,d)$ denote the moduli space of semistable torsion-free sheaves on $C$ of rank $r$ and degree $d$. Then, motivated by \cite{King-Schofield-1999}, Bhosle and Biswas proved that $\mathcal{U}_C(r, d)$ is birational to $\mathcal{U}_C(h, 0) \times \mathbb{A}^{(r^2 - h^2)(p_a(C)-1)}$, and that $\mathcal{SU}_C(r, L)$ is rational if $h=1$ \cite[Theorem 1.2]{Bhosle-Biswas-2014}. Further, they also proved that $\mathcal{SU}_C(r, L)$ is rational when $p_a(C) = 2 = r$ \cite[Theorem 1.2]{Bhosle-Biswas-2014}.
 
 Suppose $C$ is a reducible nodal curve with $n$ smooth components $C_i$ of genus $g_i \geq 2$ and $n-1$ nodes $p_i$ such that $C_i \cap C_j = \emptyset$, whenever $|i-j| >1$ and $C_i \cap C_{i+1} = \{p_i\}$, for $i=1,\dots,n-1$. Such curves are called \textit{chain-like curves}. In this case, to get a compact moduli space, one replaces the vector bundles by pure sheaves of dimension one. Also one needs to consider weights of the components (called polarization in the literature) in order to define semistability. Let $\mathcal{U}_C(w,r,\chi)$ denote the moduli space of $w$-semistable pure sheaves of dimension one and Euler characteristic $\chi$. In \cite{Bigas91}, Teixidor identifies the irreducible components of $\mathcal{U}_C(w,r,\chi)$ (for further details, see section 2). Now suppose $L$ is a fixed line bundle on $C$ of suitable Euler characteristic. Let $\overline{\mathcal{SU}_C(w,r,\chi,L)}$ denote the closure of the collection of all vector bundles in $\mathcal{U}_C(w,r,\chi)$ with determinant $L$. For the case when $n=2=r$, it was proved in \cite{Barik-Dey-Suhas-2018} by Barik, Dey and Suhas that $\overline{\mathcal{SU}_C(w,2,\chi,L)}$ is rational whenever $(2,\chi)=1$. More recently, Favale and Brivio have proved it for higher rank under some extra assumption on the Euler characteristic \cite{Brivio-Favale-VB-2020}. For $n \geq 2$ and $r=2$, it has been shown in \cite{Dey-Suhas-2018} that $\overline{\mathcal{SU}_C(w,2,\chi,L)}$ is rational provided $(2,\chi)=1$.
 
 In this article, we investigate the rationality of $\overline{\mathcal{SU}_C(w,r,\chi,L)}$ for $r \geq 2$ and $n \geq 2$ under some conditions on the Euler characteristic. Though we use the technique employed in \cite{Brivio-Favale-VB-2020}, increase in the number of components of the base curve and hence the complexity involved, makes it more challenging. The main result of this article is the following theorem.
 \begin{reptheorem}{theorem on rationality}
  Let $C$ be a chain-like curve as mentioned above. Fix $r \geq 2$, and $d_i \in \mathbb{Z}$ with $(r, d_i) = 1$, for $i = 1,\dots, n$. Let $\chi_i = d_i + r (1 -g_i)$ and $\chi = \sum\limits_{i=1}^{n} \chi_i - (n-1)r$.  For any line bundle $L$ of multidegree $(d_1,\dots,d_n)$ and any polarization $w = (w_1, \dots, w_n)$ satisfying the conditions
\[
(\sum\limits_{i=1}^j w_i)\chi - \sum\limits_{i=1}^{j-1}\chi_i + r(j-1) < \chi_j < (\sum\limits_{i=1}^j w_i)\chi - \sum\limits_{i=1}^{j-1}\chi_i + rj,
\]
for $1\leq j \leq n-1$, $\overline{\mathcal{SU}_C(w,r,\chi,L)}$ is a rational variety.
\end{reptheorem}

The coprimality condition in the above theorem is very strong. For example, if $r=2$, then there is at most one 
choice for the $\chi_j$ satisfying the coprimality assumptions and the inequalities (exactly one for generic $w$). This means 
that Theorem \ref{theorem on rationality} covers only one of the $2^{n-1}$ components of the moduli space. 
Note that this result is very different from 
that of \cite{Dey-Suhas-2018}, where the authors prove the rationality of all the components when $(2,\chi) =1$. However, 
when $n$ and $\chi$ are both even, \cite{Dey-Suhas-2018} gives no information, whereas there is a component with $(2,d_i) =1$ 
for all $i$.

The proof of Theorem \ref{theorem on rationality} involves the construction of a birational model for $\overline{\mathcal{SU}_C(w,r,\chi,L)}$ and indeed for $\mathcal{U}_{C}(w,r, \chi)_{d_1, \dots , d_n}$ 
(Theorem \ref{theorem for the first birational map} and Corollary \ref{main corollory}). In fact, this model can be constructed 
without the coprimality assumptions (see Remark \ref{birational model without coprimality conditions}).

 \section{Preliminaries}\label{section on Chain-like curves}
 Let $C$ be a reduced nodal curve having $n$ smooth irreducible components $C_i$ of genus $g_i$ with $C_i \cap C_{i+1} = \lbrace p_i \rbrace$ and $C_i \cap C_j = \emptyset$ whenever $\vert i - j \vert >1$. Such a curve $C$ is called a chain-like curve. Let $\nu : \tilde{C} \rightarrow C$ be the normalisation of $C$ such that  $\nu^{-1} (p_i) = \lbrace q_i,q_i' \rbrace $, where $q_i \in C_i$ and $q_i' \in C_{i + 1}$.
On such a curve $C$, we have the following exact sequence:
\begin{equation}\label{canonical exact sequence}
    0 \rightarrow \mathcal{O}_C \rightarrow \bigoplus_{j=1}^n\mathcal{O}_{C_j} \rightarrow \mathcal{T} \rightarrow 0.
\end{equation}
Here $\mathcal{T}$ is supported only at the nodal point(s).
 Also, it is not hard to show that each $\mathcal{T}_{p_i}$ is a one dimensional vector space over $\mathbb{C}$ and that $h^0(\mathcal{T}) = n-1$.
 From the exact sequence \eqref{canonical exact sequence}, we have 
\begin{eqnarray}
    \chi(\mathcal{O}_C) & = & \sum\limits_{j=1}^n \chi(\mathcal{O}_{C_j}) - \chi(\mathcal{T}) \nonumber \cr
    & = & n - \sum\limits_{j=1}^n g_j - (n-1)
    \nonumber \cr
    & = & 1 - \sum\limits_{j=1}^n g_j.
\end{eqnarray}
So, if we let $p_a(C) = 1 - \chi(\mathcal{O}_C)$ be the arithmetic genus of $C$, then $p_a(C) = \sum\limits_{j=1}^n g_j$.
Now, suppose $E$ is a vector bundle of rank $r$ on $C$. Tensoring the exact sequence (\ref{canonical exact sequence}) with $E$,  denoting $E_{|_{C_j}}$ by $E_j$ and the cokernel by $\mathcal{T}_E$, we obtain 
\begin{equation}\label{canonical sequence for the vector bundle E}
  0 \rightarrow E \rightarrow \bigoplus_{j=1}^n E_j \rightarrow \mathcal{T}_E \rightarrow 0. 
\end{equation}
We also have 
\begin{align}\label{equation on computing euler seq of vb on chain-like curve}
\chi(E) = \sum\limits_{j=1}^n \chi(E_j) - r(n-1).
\end{align}

\begin{definition}
 Let $F$ be a coherent sheaf of $\mathcal{O}_C$-modules. We call $F$ a pure sheaf of dimension one if; for every nonzero proper $\mathcal{O}_C$-subsheaf  $G \subset F$, the dimension of the support of $G$ is equal to one.
\end{definition}
Vector bundles on $C$ are examples of pure sheaves of dimension one. Suppose $F$ is a pure sheaf of dimension one on $C$. Let $F_j = \frac{F_{|_{C_j}}}{\text{Torsion}(F_{|_{C_j}})}$ for each $j$, where $\text{Torsion}(F_{|_{C_j}})$ is the torsion subsheaf of $F_{|_{C_j}}$. Then $F_j$, if non-zero, is torsion-free and hence locally free on $C_j$. Let $r_j$ denote the rank of $F_j$ and let $d_j$ denote the degree of $F_j$ for each $j$. We call the n-tuples $(r_1,\dots,r_n)$ and $(d_1,\dots,d_n)$ respectively, the \textit{multirank} and the \textit{multidegree} of $F$.
\begin{definition}\label{polarization}
 Let $w = (w_1,\dots,w_n)$ be an n-tuple of rational numbers such that $0 < w_j < 1$ for each $j$ and $\sum\limits_{j=1}^n w_j =1$. We call such an n-tuple a polarization on $C$.
\end{definition}

We now define the slope of a pure sheaf of dimension one on a chain-like curve $C$ with respect to a given polarization $w$.

\begin{definition}\label{polarized slope}
Let $F$ be a pure sheaf of dimension one on $C$ of multirank $(r_1,\dots,r_n)$. Then the slope of $F$ with respect to a polarization $w$, denoted by $\mu_w(F)$, is defined by $\mu_w(F) = \frac{\chi(F)}{\sum\limits_{j=1}^n w_jr_j}$.
\end{definition}

\begin{definition}
 Let $E$ be a pure sheaf of dimension one on $C$. Then, $E$ is said to be $w$-\textit{semistable} (resp. $w$-\textit{stable}) if for any proper subsheaf $F \subset E$, one has $\mu_w(F) \leq \mu_w(E)$ (resp. $\mu_w(F) < \mu_w(E)$). If $E$ is not $w$-semistable for any polarization $w$ on $C$, we call it a \textit{strongly unstable}.
\end{definition}

Now, let $E$ be a $w$-semistable pure sheaf of dimension one on $C$. Then there is a finite filtration of pure sheaves of dimension one on $C$
\begin{align}\label{filtration of Jordan Holder filtration}
0 = E^0 \subset E^1 \subset \cdots \subset E^{k-1} \subset E^{k} = E,
\end{align}
such that, for each $1\leq j\leq k$, the quotient $E^j / E^{j-1}$ is a $w$-stable pure sheaf of dimension one on $C$ with $\mu_w (E^j / E^{j-1}) = \mu_w (E)$. Such filtration is known as a {\em Jordan-H\"older filtration} of $E$. Given a Jordan  H\"older filtration \eqref{filtration of Jordan Holder filtration}, define the {\em graded sheaf} associated to $E$ as
\begin{align}
Gr_w(E) = \bigoplus_{j =1}^{k} E^j / E^{j-1},
\end{align}
which depends on the isomorphism class of $E$. Two $w$-semistable pure sheaves $E$ and $F$, each of dimension one on $C$ are said to be $S_w$-equivalent if 
\begin{align*}
Gr_w(E) = Gr_w(F).
\end{align*} 
If $E$ and $F$ are $w$-stable sheaves, then $S_w$-equivalence is equivalent to an isomorphism. \\
The following result from \cite[Theorem-1, Steps-1,2]{Bigas91} will be useful in subsequent theorems. 
\begin{theorem}\label{theorem of Bigas}
 Suppose $C$ is a chain-like curve and $E$ is a vector bundle on $C$ of rank $r$. Let $E_i$ denote the restriction of $E$ to the component $C_i$ for $i = 1,\dots,n$ and $w$ denote a polarization on $C$. Then 
 \begin{enumerate}
     \item[(1)] if $E$ is $w$-semistable, $\chi$ is the Euler characteristic of $E$ and $\chi_j$ the Euler characteristic of $E_j$ for each $j$, then
     \begin{equation}\label{polarization inequalities}
         (\sum\limits_{i=1}^j w_i)\chi - \sum\limits_{i=1}^{j-1}\chi_i + r(j-1) \leq \chi_j \leq (\sum\limits_{i=1}^j w_i)\chi - \sum\limits_{i=1}^{j-1}\chi_i + rj,
     \end{equation}
     where $j$ varies from $1$ to $n-1$;
     \item[(2)] if $E_i$ is a semistable vector bundle for $i = 1,\dots,n$ and the Euler characteristics of $E$ and $E_i$ satisfy the inequalities \eqref{polarization inequalities}, then $E$ is $w$-semistable. Further, if the inequalities \eqref{polarization inequalities}  are strict for $i=1,\dots, n-1$ and at least one $E_i$ is stable, then $E$ is $w$-stable.
     \item[(3)] The irreducible components of $\mathcal{U}_C(w,r,\chi)$ are indexed by the multidegrees $(d_1,\dots,d_n)$ for 
     which $\chi_i = d_i -r(g_i-1)$ for $1\leq i \leq n$, such that the $\chi_i$ satisfy the strict inequalities \eqref{polarization inequalities} for $1\leq i \leq n-1$ and $\sum_{i}^n\chi_i -r(n-1) = \chi$.
 \end{enumerate}
\end{theorem}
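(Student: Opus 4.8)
The plan is to prove the three statements in order: part (1) by testing $w$-semistability against a short list of subsheaves read off from the chain decomposition, part (2) by the reverse estimate combined with the semistability of the restrictions, and part (3) by feeding (1) and (2) into an irreducibility-plus-dimension count for the family of bundles glued from the components.

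For part (1), I would argue as follows. For each $1\le j\le n-1$ write $C=D_j\cup_{p_j}D_j'$ with $D_j=C_1\cup\cdots\cup C_j$ and $D_j'=C_{j+1}\cup\cdots\cup C_n$, two connected sub-chains meeting only at $p_j$. The subsheaf $\ker\!\big(E\to E|_{D_j'}\big)$ of $E$ equals $E|_{D_j}(-p_j)$, has multirank $(r,\dots,r,0,\dots,0)$ with $j$ entries equal to $r$, and — applying the analogue of \eqref{canonical sequence for the vector bundle E} to the sub-chain $D_j$ — has Euler characteristic $\sum_{i\le j}\chi_i-rj$. Substituting this into $\mu_w(\,\cdot\,)\le\mu_w(E)=\chi/r$ and clearing denominators (valid since $0<\sum_{i\le j}w_i<1$) gives, after rearranging, the upper bound for $\chi_j$ in \eqref{polarization inequalities}; the complementary subsheaf $\ker\!\big(E\to E|_{D_j}\big)=E|_{D_j'}(-p_j)$ (equivalently, the quotient $E\twoheadrightarrow E|_{D_j}$) gives the lower bound in the same way. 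This step is routine $\chi$-bookkeeping on sub-chains.

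For part (2), I would argue contrapositively. Given a subsheaf $F\subsetneq E$, which may be assumed saturated (saturating leaves the multirank unchanged and can only raise $\chi(F)$), set $(s_1,\dots,s_n)$ for its multirank and $F_i:=(F|_{C_i})/\mathrm{Torsion}\subseteq E_i$. Three inputs combine: (i) semistability of $E_i$, which forces $\chi(F_i)\le\frac{s_i}{r}\chi_i$, with equality only when $F_i$ is semistable of the same slope and \emph{strictly} when $E_i$ is stable and $0<s_i<r$; (ii) the fact that, for a subsheaf of a vector bundle, $F$ injects into $\bigoplus_iF_i$, so $\chi(F)=\sum_i\chi(F_i)-\delta(F)$ with a node defect $\delta(F)\ge 0$ whose contribution at $p_i$ is bounded by the overlap $\min(s_i,s_{i+1})$; and (iii) the assumed inequalities \eqref{polarization inequalities}, which are exactly the statement that the subsheaves supported on a sub-chain (the profiles $s_i\in\{0,r\}$) are non-destabilizing. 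The main obstacle — the one genuinely non-routine point, and where the passage to larger $n$ is felt — is to show that an \emph{arbitrary} multirank profile is controlled by these sub-chain cases: one must check that the subsheaf of largest $\chi$ with a given profile is obtained by taking a maximal-slope rank-$s_i$ subbundle of each $E_i$ glued as fully as the profile permits, and that for such an $F$ the inequality $\mu_w(F)\le\mu_w(E)$ reduces, after a delicate rearrangement in which the off-by-one node contributions must be tracked carefully, to \eqref{polarization inequalities}. Examining when the equalities in (i)--(iii) can hold simultaneously then yields $w$-stability when the inequalities are strict and some $E_i$ is stable.

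For part (3), part (1) shows that the multidegree $(d_1,\dots,d_n)$ of any $w$-semistable vector bundle — hence, since the vector-bundle locus is dense, the generic multidegree along any component of $\mathcal U_C(w,r,\chi)$ — satisfies $\chi_i=d_i-r(g_i-1)$, the closed inequalities \eqref{polarization inequalities}, and $\sum_i\chi_i-r(n-1)=\chi$; only finitely many multidegrees qualify. Conversely, for each such multidegree satisfying the \emph{strict} inequalities, part (2) shows every vector bundle obtained by gluing semistable $E_i$ of rank $r$ and degree $d_i$ at the nodes is $w$-semistable (and $w$-stable as soon as one $E_i$ is stable); these bundles are parametrized by a fibre bundle over the irreducible variety $\prod_{i=1}^n\mathcal U_{C_i}(r,d_i)$ with irreducible fibres (the gluing data at the $n-1$ nodes modulo the scalar automorphisms of the $E_i$), hence form an irreducible family, and a dimension count shows it has dimension $r^2(p_a(C)-1)+1=\dim\mathcal U_C(w,r,\chi)$. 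Finally, a genericity argument — the loci of $w$-semistable sheaves that fail to be locally free at some node, or are vector bundles with an unstable restriction to some $C_i$, or have multidegree realising an equality in \eqref{polarization inequalities}, each have strictly smaller dimension — shows that each such family is dense in exactly one component and that distinct multidegrees index distinct components, which is the claim.
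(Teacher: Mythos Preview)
The paper does not prove this statement: it is quoted verbatim from \cite[Theorem~1, Steps~1--2]{Bigas91} (Teixidor i Bigas) and used as a black box, so there is no in-paper argument to compare against.

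Your sketch is nonetheless essentially Teixidor's original proof. For (1), testing $w$-semistability against the sub-chain subsheaves $\ker(E\to E|_{D_j'})$ and $\ker(E\to E|_{D_j})$ is exactly her Step~1, and your Euler-characteristic bookkeeping is correct. For (2), the contrapositive reduction via a saturated subsheaf $F$, its restrictions $F_i\subseteq E_i$, the node-defect formula $\chi(F)=\sum_i\chi(F_i)-\delta(F)$, and the combinatorial reduction of an arbitrary multirank profile to the extremal sub-chain profiles is her Step~2; you are right to flag the ``delicate rearrangement'' as the only non-routine part, and your outline leaves it programmatic rather than executed, but the strategy is sound. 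For (3), your fibre-bundle parametrization over $\prod_i\mathcal U_{C_i}(r,d_i)$ together with a dimension count is a clean way to get irreducibility and is in fact the route the present paper itself takes in Sections~3--4 (the bundle $\mathbb P$ and Remark~4.1); Teixidor's own argument in \cite{Bigas91} proceeds instead via degeneration and is organized somewhat differently, but both lead to the same conclusion.
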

From now on, we denote the irreducible components of $\mathcal{U}_C(w,r,\chi)$ by $\mathcal{U}_C(w,r,\chi)_{d_1,\dots,d_n}$.

 \section{Construction of pure sheaves of dimension one}
 
\begin{comment}
Throughout in this paper, a chain-like curve $C$ means a complex reduced curve $C$ having $n$ smooth components $C_i$ with $(n-1)$ nodes $p_i$ and $g_i \geq 2$ unless otherwise specified. 
\end{comment}

 Let $C$ be a smooth projective curve over the base field $\mathbb{C}$ with genus $g \geq 2$. Let $\mathcal{U}_C(r,d)$ be the moduli space of semistable vector bundles of rank $r$ and degree $d$ on a curve $C$. It is a quasi-projective variety and smooth projective when $r$ and $d$ are co-prime \cite{Seshadri-1982}. Moreover, 
\[ 
 \dim \mathcal{U}_C(r,d)= r^2(g-1) +1
 \] (see \cite{Seshadri-1982}). Note that in the co-prime case, the notions of stability and semistability coincide. 
 
Let $C$ be a chain-like curve having $n$ smooth components $C_i$ with $(n-1)$ nodes $p_i$. Let $r$ and $d_1, \dots , d_n$ be integers with $r \geq 1$.
In this section, we construct a fibre product of projective bundles $ \mathbb{P}$ over $\mathcal{U}_{C_1}(r, d_1) \times \cdots \times \mathcal{U}_{C_n}(r, d_n)$ and then we also prove the existence of a polarization $w$ (see \ref{Existence of polarization}) such that the induced map from $ \mathbb{P}$  to $\mathcal{U}_C(w,r, \chi)_{d_1,\dots , d_n}$ is birational.
Here, $\mathcal{U}_C(w,r, \chi)_{d_1,\dots , d_n}$ denotes the irreducible component of the moduli space of $w$-semistable  vector bundles of rank $r$ and multidegree $(d_1,\dots , d_n)$ on a chain-like curve $C$ with Euler characteristic $\chi$.

\begin{theorem} \label{theorem on exists on poincare bundle on chain-like curve}
Let $C$ be a chain-like curve having $n$ smooth irreducible components $C_i$ of genus $g_i \geq 1$ with nodes $p_i$ and  $\nu^{-1} (p_i) = \lbrace q_i,q_i' \rbrace $. Fix $r \geq 2$, and $d_1,\dots,d_n \in \mathbb{Z}$ such that $r$ is pairwise co-prime with  $d_1,\dots,d_n$. Then there is a locally trivial map
\[
\pi : \mathbb{P} \rightarrow \mathcal{U}_{C_1}(r, d_1) \times \cdots \times \mathcal{U}_{C_n}(r, d_n),
\]
where $\mathbb{P}$ is a fibre product of projective bundles, such that the fibre at $([E_1],\dots,[E_n])$ is 
\[
 ( \mathbb{P}(\text{Hom}(E_{1,q_1}, E_{2, q_1'})) , \dots , \mathbb{P}( \text{Hom}(E_{n-1,q_{n-1}}, E_{n, q_{n-1}'}))).
 \]
Here $E_{i, q_i}$ (resp. $E_{i+1, q_{i}'}$) denotes the fibre of $E_i$ (resp. $E_{i+1}$) at $q_i$ (resp. $q_{i}'$).
\end{theorem}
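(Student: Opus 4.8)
The plan is to realise $\mathbb{P}$ as an iterated fibre product of projectivisations of Hom-bundles built from the Poincaré bundles on the factors. Since $(r,d_i)=1$ for each $i$, the moduli space $\mathcal{U}_{C_i}(r,d_i)$ is a fine moduli space and hence carries a universal (Poincaré) bundle $\mathcal{E}_i$ on $C_i\times\mathcal{U}_{C_i}(r,d_i)$ (the existence of a universal bundle in the coprime case is standard; see \cite{Seshadri-1982}). Put $M=\mathcal{U}_{C_1}(r,d_1)\times\cdots\times\mathcal{U}_{C_n}(r,d_n)$, let $\mathrm{pr}_i\colon M\to\mathcal{U}_{C_i}(r,d_i)$ denote the $i$-th projection, and let $\mathcal{E}_i(q_i)$ (resp. $\mathcal{E}_{i+1}(q_i')$) be the restriction of $\mathcal{E}_i$ to $\{q_i\}\times\mathcal{U}_{C_i}(r,d_i)$ (resp. of $\mathcal{E}_{i+1}$ to $\{q_i'\}\times\mathcal{U}_{C_{i+1}}(r,d_{i+1})$), a rank $r$ vector bundle whose fibre over $[E_i]$ is $E_{i,q_i}$ (resp. over $[E_{i+1}]$ is $E_{i+1,q_i'}$) by the universal property. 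For $1\le i\le n-1$ I set
\[
\mathcal{H}_i=\SheafHom\!\left(\mathrm{pr}_i^{*}\mathcal{E}_i(q_i),\ \mathrm{pr}_{i+1}^{*}\mathcal{E}_{i+1}(q_i')\right),
\]
a locally free sheaf of rank $r^2$ on $M$, and define
\[
\mathbb{P}=\mathbb{P}(\mathcal{H}_1)\times_M\mathbb{P}(\mathcal{H}_2)\times_M\cdots\times_M\mathbb{P}(\mathcal{H}_{n-1}),
\]
with $\pi\colon\mathbb{P}\to M$ the structure morphism.

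It then remains to check the two claims. For local triviality, note that each projective bundle $\mathbb{P}(\mathcal{H}_i)\to M$ is Zariski-locally trivial with fibre $\mathbb{P}^{r^2-1}$; choosing an open $U\subseteq M$ over which all the $\mathcal{H}_i$ simultaneously trivialise, one gets $\pi^{-1}(U)\cong U\times(\mathbb{P}^{r^2-1})^{n-1}$, so $\pi$ is a locally trivial fibration. For the identification of the fibre at $x=([E_1],\dots,[E_n])$, observe that restricting a locally free sheaf to a closed point commutes with both $\SheafHom$ and pullback, so the fibre of $\mathcal{H}_i$ at $x$ is $\mathrm{Hom}\bigl(\mathcal{E}_i(q_i)_{[E_i]},\mathcal{E}_{i+1}(q_i')_{[E_{i+1}]}\bigr)=\mathrm{Hom}(E_{i,q_i},E_{i+1,q_i'})$; since forming $\mathbb{P}(-)$ commutes with base change, the fibre of $\mathbb{P}$ at $x$ is
\[
\left(\mathbb{P}\bigl(\mathrm{Hom}(E_{1,q_1},E_{2,q_1'})\bigr),\dots,\mathbb{P}\bigl(\mathrm{Hom}(E_{n-1,q_{n-1}},E_{n,q_{n-1}'})\bigr)\right),
\]
which is the asserted fibre.

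The only delicate point is the non-uniqueness of the Poincaré bundle: $\mathcal{E}_i$ is determined only up to tensoring by a line bundle pulled back from $\mathcal{U}_{C_i}(r,d_i)$, so a priori $\mathcal{H}_i$ — and hence $\mathbb{P}$ — might depend on these choices. I would dispose of this by noting that replacing $\mathcal{E}_i$ by its twist by (the pullback of) a line bundle $N_i$ replaces $\mathcal{E}_i(q_i)$ by $\mathcal{E}_i(q_i)\otimes N_i$, hence $\mathcal{H}_i$ by $\mathcal{H}_i\otimes\mathrm{pr}_{i+1}^{*}N_{i+1}\otimes\mathrm{pr}_i^{*}N_i^{\vee}$; since $\mathbb{P}(\mathcal{H}_i\otimes L)\cong\mathbb{P}(\mathcal{H}_i)$ over $M$ for any line bundle $L$ on $M$, the projective bundle $\mathbb{P}(\mathcal{H}_i)$, and therefore the pair $(\mathbb{P},\pi)$, is canonically independent of all choices. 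Beyond this normalisation issue, the argument is a formal consequence of the existence of the universal bundles together with the elementary compatibility of $\SheafHom$, pullback, and projectivisation with base change; I do not expect any further obstacle.
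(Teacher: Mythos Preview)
Your construction is essentially identical to the paper's: both pull back the Poincar\'e bundles restricted to the marked points $q_i,q_i'$ via the projections to the factors, form the rank-$r^2$ sheaf $\mathcal{H}_i=\mathcal{F}_i$ as a sheaf Hom on the product, and take $\mathbb{P}$ to be the fibre product of the $\mathbb{P}(\mathcal{H}_i)$. Your added remarks on Zariski-local triviality and on independence of the choice of Poincar\'e bundle are correct elaborations that the paper leaves implicit.
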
 
\begin{proof}
The proof follows the constructions of Lemma 3.1 of \cite{Brivio-Favale-VB-2020}, generalizing it to chain-like curves.

Since $r$ is pairwise co-prime with $d_1,\dots,d_n$, for each $i$ there exists a Poincar\'e bundle $\mathcal{P}_i$ for the moduli space of semistable vector bundles on $C_i$ of rank $r$ and degree $d_i$. In other words, for each $i$, we have the Poincar\'e bundle $\mathcal{P}_i$ on $\mathcal{U}_{C_i}(r, d_i) \times C_i$ with $\mathcal{P}_i \vert_{[E_{i}] \times C_i} \cong E_i$ via the identification $[E_i] \times C_i \cong C_i$.  Let 
\begin{align*}
\iota_i : \mathcal{U}_{C_i}(r, d_i) \times q_i \hookrightarrow \mathcal{U}_{C_i}(r, d_i) \times C_i, 
\end{align*}
\begin{align*}
\iota_i' : \mathcal{U}_{C_{i+1}}(r, d_{i+1})  \times q_{i}' \hookrightarrow \mathcal{U}_{C_{i+1}}(r, d_{i+1}) \times C_{i+1}, 
\end{align*}
be the natural inclusions, and $\iota_i^*(\mathcal{P}_i)$, ${\iota'}_i^*(\mathcal{P}_{i+1})$ the pullbacks under these inclusions of the Poincar\'e bundle $\mathcal{P}_i$, where $i = 1,\dots, n-1$. Note that  $\mathcal{U}_{C_i}(r, d_i) \times q_i \cong \mathcal{U}_{C_i}(r, d_i)$, and $\mathcal{U}_{C_{i+1}}(r, d_{i+1}) \times q_{i}' \cong \mathcal{U}_{C_{i+1}}(r, d_{i+1})$,  (1 $\leq i \leq n-1)$.  Therefore, the vector bundle  $\iota_i^*(\mathcal{P}_i)$ (resp. ${\iota'}_i^*(\mathcal{P}_{i+1})$) can  be considered as a vector bundle on $\mathcal{U}_{C_i}(r, d_i)$ (resp. $\mathcal{U}_{C_{i+1}}(r, d_{i+1})$) of rank $r$ for $i = 1,\dots, n-1$. Moreover, the fibre of $\iota_i^*(\mathcal{P}_i)$ at $[E_i]$ is $E_{i, q_i}$, and the fibre of ${\iota'}_i^*(\mathcal{P}_{i+1})$ at $[E_{i+1}]$ is $E_{{i+1}, q_{i}'}$. \\
The product $\mathcal{U}_{C_1}(r, d_1) \times \cdots \times \mathcal{U}_{C_n}(r, d_n)$ is a smooth irreducible projective variety. Let $\pi_i :  \mathcal{U}_{C_1}(r, d_1) \times \cdots \times \mathcal{U}_{C_n}(r, d_n) \rightarrow \mathcal{U}_{C_i}(r, d_i)$ be the $i^\text{th}$ projection map. We define the following sheaves on $\mathcal{U}_{C_1}(r, d_1) \times \cdots \times \mathcal{U}_{C_n}(r, d_n)$:
\begin{align}
\mathcal{F}_i := \mathcal{H}om(\pi^*_i(\iota^*_i(\mathcal{P}_i)), \pi^*_{i+1}({\iota'}^*_i (\mathcal{P}_{i+1}))).
\end{align}
Let 
\[
\pi: \mathbb{P} \rightarrow \mathcal{U}_{C_1}(r, d_1) \times \cdots \times \mathcal{U}_{C_n}(r, d_n)
\]
denote the fibre product of $\mathbb{P}(\mathcal{F}_i)$ over $\mathcal{U}_{C_1}(r, d_1) \times \cdots \times \mathcal{U}_{C_n}(r, d_n)$.
By construction, the fibre of $\pi$ at a point $([E_1],\dots,[E_n])$ is $\mathbb{P}(\text{Hom}(E_{1,q_1}, E_{2, q_1'})) \times \dots \times \mathbb{P}(\text{Hom}(E_{n-1,q_{n-1}}, E_{n, q_{n-1}'}))$.
\end{proof}

\begin{remark}\label{birational model without coprimality conditions}
Note that $\mathbb{P}$ can be defined without the coprimality conditions in the above theorem. 
Let $\mathcal{Q}_i = \mathcal{Q}(\mathcal{O}_{C_i}^{N_i}/P_i)$ denote the Grothendieck's Quot scheme parameterising 
the quotients of $\mathcal{O}_{C_i}^{N_i}$ having fixed Hilbert polynomial $P_i$. Then, for a suitable choice of $N_i$ and $P_i$, there is an open subset $\mathcal{R}_i \subset \mathcal{Q}_i$ such that the action of $\text{GL}(N_i)$ on it descends to an action of $\text{PGL}(N_i)$, and the 
quotient is $\mathcal{U}_{C_i}^s(r,d_i)$ (the moduli space of stable bundles of rank $r$ and degree $d_i$ on $C_i$). Also, there exists a projective Poincar\'e bundle $\mathcal{P}_i$ on 
$\mathcal{R}_i\times C_i$, whose restriction on $\{E\} \times C_i$ is isomorphic to $\mathbb{P}(E)$ for all $E \in \mathcal{R}_i$ 
(see \cite{Biswas-Paz-Newstead} for details). 
Now, we can construct the bundles $\mathcal{F}_i$ on $\mathcal{R}_1\times \cdots \times \mathcal{R}_n$ and the action of 
$\text{GL}(N_1) \times \cdots \times \text{GL}(N_n)$ on $\mathbb{P}(\mathcal{F}_i)$ descends to an action of 
$\text{PGL}(N_1) \times \cdots \times \text{PGL}(N_n)$. By taking the fibre product of $\mathbb{P}(\mathcal{F}_i)$,
we obtain $\mathbb{P}$ as a bundle on $\mathcal{U}_{C_1}^s(r, d_1) \times \cdots \times \mathcal{U}_{C_n}^s(r, d_n)$, 
with the same fibre as in Theorem \ref{theorem on exists on poincare bundle on chain-like curve}. It is to be noted that this 
bundle is no longer locally trivial in the Zariski topology. However, it does provide a birational model for 
$\mathcal{U}_C(w,r,\chi)_{d_1,...,d_n}$.
% and therefore the coprimality condition in Theorem \ref{theorem for the first birational map} can be removed.
\end{remark}

Throughout this paper, by $\mathbb{P}$ we always mean the fibre product of $\mathbb{P}(\mathcal{F}_i)$ as mentioned in 
the previous theorem and remark.

Let $C$ be a chain-like curve. 
Let $j_{p_i}$ be the inclusion of $\lbrace p_i \rbrace$  in $C$, and let  $j_i : C_i \hookrightarrow C$ be the inclusion map for $i = 1,\dots,n$. Then for $1 \leq i \leq n-1 $,  we have the following natural surjective morphisms:
\[
\rho_i : j_{i*}E_i \rightarrow E_{i, q_i} \quad (1 \leq i \leq n-1),
\]
and
\[
{\rho'}_l: j_{l*}E_l \rightarrow E_{l, q_{l-1}'} \quad (2 \leq l \leq n).
\]
The sheaf $j _{p_{i}*}(j_{p_{i}}^* (j_{i+1 *}E_{i+1}))$ is a skyscraper sheaf supported only at $p_i$, and is such that, on any open set containing $p_i$ it is equal to $E_{i+1, q'_i}$. Therefore, we have a canonical isomorphism
\[
\rho_{p_i}' : j _{p_{i}*}(j_{p_{i}}^* (j_{i+1 *}E_{i+1})) \rightarrow E_{i+1, q_{i}'}, 
\]
for each $i$ from 1 to $ n-1$.
Now, let
\begin{align*}
\sigma_i : E_{i,q_i} \rightarrow E_{i+1, q_i'}
\end{align*}
be a linear map for $i = 1,\dots,n-1$. We then have  the induced map
 \[
\sigma_i \oplus \text{id} :E_{i, q_i} \oplus E_{i+1, q_i'} \rightarrow \text{Im}(\sigma_i) \oplus E_{i+1, q_i'}.
\]
Further, let 
\[
\delta_i : \text{Im}(\sigma_i) \oplus E_{i+1, q_i'} \rightarrow E_{i+1, q_i'}
\]
be the map defined by
\[
(u, v) \mapsto u-v,
\]
for $i = 1,\dots, n-1$. We denote the diagonal inside $ \text{Im}(\sigma_i) \oplus \text{Im}(\sigma_i)$ by $\Delta_i$, and $\oplus_{i=1}^{n-1} \Delta_i$ by $\Delta$. Note that $\Delta_i \cong \mathbb{C}^{k_i}$, where $k_i$ denotes the rank of $\sigma_i$.
We then have maps
\[
\tilde{\sigma}_i : j_{i *}(E_{i}) \oplus j_{(i+1)*}(E_{i+1})  \rightarrow  j_{p_i*}(j^*_{p_i} (j_{(i+1) *}(E_{i+1})))
\]
defined by
\[
\tilde{\sigma}_i(u_i,u_{i+1}) = {\rho_{p_i}'}^{-1}(\sigma_i \circ \rho_i(u_i) -\rho_{i+1}'(u_{i+1}))
\]
for $1\leq i \leq n-1$.
Now define 
\[
\tilde{\sigma} : \oplus_{i=1}^n j_{i *}(E_{i}) \rightarrow  \oplus_{i=1}^{n-1}j_{p_i*}(j^*_{p_i} (j_{(i+1) *}(E_{i+1})))
\]
by 
\[
\tilde{\sigma}(u_1,\dots,u_n) := (\tilde{\sigma}_1(u_1,u_2),\dots, \tilde{\sigma}_{n-1}(u_{n-1},u_n)).
\]
Let $K_i := \text{Ker}(\sigma_i \circ \rho_i : j_{i*} E_i \rightarrow \text{Im}(\sigma_i))$ and $K_{i+1}' := \text{Ker}( \rho_{i+1}' : j_{{i+1}*}E_{i+1} \rightarrow E_{i+1, q_i'})$. Then for $2 \leq i \leq n-1$, define 
\[
\tilde{K_i} : = K_i \cap {K_i}',
\]
where $\tilde{K_1} = K_1$ and $\tilde{K_n} = K_n'$. 
This gives the following short exact sequence:
\[
0 \rightarrow \tilde{K_i} \rightarrow {j_i}_*(E_i) \xrightarrow{(\rho_i', \sigma_i\circ \rho_i)} E_{i,q_{i-1}'} \oplus \text{Im}(\sigma_i) \rightarrow 0,
\]
for $2 \leq i \leq n-1$. We then have the following diagram:

\[
    \xymatrix@=.6cm{
     && 0 \ar[dd] && 0 \ar[dd]    
     \\ \\
        && {\rm Ker}(\tilde{\sigma}) \ar[rr]\ar[dd] & & \Delta  \ar[dd]  \\
               \\
             && \oplus_{i=1}^{n} j_{i *}(E_{i}) \ar[rr] \ar[dd]_{\tilde{\sigma}} \ar[rd]^{\oplus^{n-1}_{i=1} (\rho_i, \rho_{i+1}')} & & \oplus_{i=1}^{n-1}(\text{Im}(\sigma_i) \oplus E_{i+1, q_i'}) \ar[dd]_{\oplus_{i=1}^{n-1}\delta_i} 
       \\
        & & & \oplus_{i=1}^{n-1}(E_{i, q_i} \oplus E_{i+1, q_i'}) \ar[ru]^{\oplus^{n-1}_{i=1}({\sigma_i \oplus {\rm id}})} & & & \\
      &&  \oplus_{i=1}^{n-1} j_{p_i*}(j^*_{p_i} (j_{i+1 *}(E_{i+1}))) \ar[rr]^{\oplus_{i=1}^{n-1}\rho_{p_i}'} \ar[dd] && \oplus_{i=1}^{n-1}(E_{i+1, q_i'}) \ar[dd]     
      \\ \\
      && 0  && 0.           
                       }
\]
\begin{comment}
\[
    \xymatrix@=.4cm{
     && 0 \ar[dd] && 0 \ar[dd]    
     \\ \\
       K \ar@{^{(}->}[rr] \ar@{=}[dd] && {\rm Ker}(\tilde{\sigma}) \ar[rr]\ar[dd] & & \Delta  \ar[dd]  \\
               \\
          K \ar@{^{(}->}[rr]  && \oplus_{i=1}^{n} j_{i *}(E_{i}) \ar[rr] \ar[dd]_{\tilde{\sigma}} \ar[rd]^{\oplus^{n-1}_{i=1} (\rho_i, \rho_{i+1}')} & & \oplus_{i=1}^{n-1}(\text{Im}(\sigma_i) \oplus E_{i+1, q_i'}) \ar[dd]_{\oplus_{i=1}^{n-1}\delta_i} 
       \\
        & & & \oplus_{i=1}^{n-1}(E_{i, q_i} \oplus E_{i+1, q_i'}) \ar[ru]^{\oplus^{n-1}_{i=1}({\sigma_i \oplus {\rm id}})} & & & \\
      &&  \oplus_{i=1}^{n-1} j_{p_i*}(j^*_{p_i} (j_{i+1 *}(E_{i+1}))) \ar[rr]^{\oplus_{i=1}^{n-1}\rho_{p_i}'} \ar[dd] && \oplus_{i=1}^{n-1}(E_{i+1, q_i'}) \ar[dd]     
      \\ \\
      && 0  && 0.           
                       }
\]
\end{comment}
\begin{definition}\label{def of E_u}
Let $u = ([E_1], \dots , [E_n]; [\sigma_1], \dots , [\sigma_{n-1}]) \in  \mathbb{P}$. We denote the kernel of $\tilde{\sigma}$ by  $E_u$.
\end{definition}
Note that the construction of $E_u$ is similar to those of GPB's which give results for the case of irreducible nodal curves (see \cite{Bhosle-92}).
\begin{lemma}\label{Characterisation of vector bundle}
Let $E_u$ be the sheaf defined by $u = ([E_1],\dots,[E_n]; [\sigma_1],\dots,$ $[\sigma_{n-1}]) \in  \mathbb{P}$. Then, $E_u$ is a pure sheaf of dimension one on a chain-like curve $C$ with $\chi(E_u) = \sum\limits_{i=1}^n \chi_i - (n-1)r$ and multirank $(r,\dots,r)$. Moreover, $E_u$ is a vector bundle if and only if $\sigma_i$ is an isomorphism for $i= 1,\dots,n-1$.
\end{lemma}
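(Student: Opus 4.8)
The plan is to read the first three assertions off the short exact sequence defining $E_u$ — which is essentially the left-hand column of the diagram above — and then to decide local freeness at each node by an explicit computation in the complete local ring there.

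First I would check that $\tilde\sigma$ is surjective, so that the left column of the diagram becomes the short exact sequence
\[
0 \to E_u \to \bigoplus_{i=1}^{n} j_{i*}(E_i) \xrightarrow{\ \tilde\sigma\ } \bigoplus_{i=1}^{n-1} j_{p_i*}\!\bigl(j_{p_i}^*\!\bigl(j_{(i+1)*}(E_{i+1})\bigr)\bigr) \to 0 .
\]
The target is a skyscraper sheaf supported on $\{p_1,\dots,p_{n-1}\}$ whose stalk at $p_i$ is isomorphic to $E_{i+1,q_i'}$, of dimension $r$. Surjectivity follows by solving for the summands one at a time: given a section $(t_1,\dots,t_{n-1})$, set $u_1=0$ and, having chosen $u_i$, pick $u_{i+1}$ with $\rho'_{i+1}(u_{i+1}) = \sigma_i\circ\rho_i(u_i) - \rho'_{p_i}(t_i)$, which is possible because $\rho'_{i+1}$ is surjective; then $\tilde\sigma(u_1,\dots,u_n) = (t_1,\dots,t_{n-1})$. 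Since $j_i$ is a closed immersion we have $\chi(j_{i*}E_i)=\chi(E_i)=\chi_i$, and the skyscraper has Euler characteristic $(n-1)r$; additivity of $\chi$ on the sequence then gives $\chi(E_u)=\sum_{i=1}^n\chi_i-(n-1)r$.

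Purity and the multirank come essentially for free: $E_u$ is a nonzero subsheaf of $\bigoplus_{i=1}^n j_{i*}(E_i)$, and each $j_{i*}(E_i)$ is a pure sheaf of dimension one on $C$ (the push-forward of a locally free sheaf under the closed immersion $j_i$), so any nonzero subsheaf of their direct sum is pure of dimension one. Since the cokernel in the sequence is supported on the nodes, $E_u$ agrees with $\bigoplus_i j_{i*}(E_i)$ away from $\{p_1,\dots,p_{n-1}\}$; restricting to $C_i$ and killing the torsion therefore gives $(E_u)_i$ of rank $r$ for every $i$, so the multirank is $(r,\dots,r)$.

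For the last assertion, note that on $C\setminus\{p_1,\dots,p_{n-1}\}$ the sheaf $E_u$ is already locally free of rank $r$, so only the nodes are in question. Fix $i$, let $R=\mathbb{C}[[x,y]]/(xy)$ be the complete local ring of $C$ at $p_i$ with $x$ the coordinate along $C_i$ and $y$ that along $C_{i+1}$, and let $\mathfrak{m}=(x,y)$; the completed stalks of $j_{i*}E_i$ and $j_{(i+1)*}E_{i+1}$ at $p_i$ are $\mathbb{C}[[x]]^r$ and $\mathbb{C}[[y]]^r$ (with $y$, resp. $x$, acting as zero), and the stalk of $E_u$ is $M=\bigl\{(s,t)\in\mathbb{C}[[x]]^r\oplus\mathbb{C}[[y]]^r : \sigma_i(\bar s)=\bar t\bigr\}$, where $\bar s,\bar t$ denote the values at the node. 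A direct computation yields $\mathfrak{m}M=x\mathbb{C}[[x]]^r\oplus\bigl(y\cdot\mathrm{Im}(\sigma_i)+y^2\mathbb{C}[[y]]^r\bigr)$, hence $\dim_{\mathbb{C}}(E_u\otimes k(p_i))=\dim_{\mathbb{C}}M/\mathfrak{m}M=2r-\mathrm{rank}(\sigma_i)$. If $\sigma_i$ is an isomorphism, choosing bases with $\sigma_i=\mathrm{id}$ exhibits $(e_1,e_1),\dots,(e_r,e_r)$ (with $e_k$ the $k$-th standard section on each branch) as an $R$-basis of $M$, so $E_u$ is free of rank $r$ at $p_i$. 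Conversely, if $E_u$ is locally free at $p_i$, then — having rank $r$ along $C_i$ — its stalk must be $R^r$, which forces $\dim_{\mathbb{C}}M/\mathfrak{m}M=r$ and hence $\mathrm{rank}(\sigma_i)=r$. Therefore $E_u$ is a vector bundle if and only if every $\sigma_i$ is an isomorphism. I expect the computation of $\mathfrak{m}M$ to be the delicate point, since the coupling $\bar t=\sigma_i(\bar s)$ between the two branches makes it easy to drop the contribution $y\cdot\mathrm{Im}(\sigma_i)$; once the fibre dimension $2r-\mathrm{rank}(\sigma_i)$ is in hand, both directions of the criterion are immediate.
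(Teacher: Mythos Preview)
Your argument is correct. It differs from the paper's in emphasis rather than substance: the paper's proof is a two-line appeal to Seshadri's classification of torsion-free modules over a node, asserting directly that the completed stalk of $E_u$ at $p_i$ is $\mathcal{O}_{p_i}^{k_i}\oplus\mathcal{O}_{q_i}^{r-k_i}\oplus\mathcal{O}_{q_i'}^{r-k_i}$ (with $k_i=\operatorname{rank}\sigma_i$), from which purity and the locally-free criterion are immediate. You instead argue from first principles: purity comes from the subsheaf-of-a-pure-sheaf observation, and the vector bundle criterion from your explicit computation $\dim_{\mathbb C}(E_u\otimes k(p_i))=2r-k_i$. Your route is more self-contained and avoids quoting the classification, while the paper's route is shorter and actually yields the full local isomorphism type (not just the fiber dimension), which is what is used later in the paper, e.g.\ in Proposition~\ref{converse}, where the stalk is written as $\mathcal{O}_{p_i}^{s_i'}\oplus\mathcal{O}_{q_i}^{a_i}\oplus\mathcal{O}_{q_i'}^{b_i}$. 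In fact your computation implicitly recovers this decomposition: after putting $\sigma_i$ in normal form, your description of $M$ splits as $R^{k_i}\oplus\mathbb C[[x]]^{r-k_i}\oplus(y\mathbb C[[y]])^{r-k_i}$, which is exactly the paper's formula.
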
 
\begin{proof}
Let $k_i$ denote the rank of $\sigma_i$, $1 \leq i \leq n-1$. Note that $E_u$ is equal to $\text{Ker}(\tilde{\sigma})$.
% = \text{Ker\oplus_{i =1}^{n-1} \tilde{\sigma}_i)$. 
Therefore, $E_u \vert_{p_i} \cong \mathcal{O}_{p_i}^{k_i} \oplus \mathcal{O}_{q_i}^{r-k_i} \oplus \mathcal{O}_{q_i'}^{r-k_i}$, which implies that $E_u$ is a pure sheaf of dimension one \cite{Seshadri-1982}. Moreover, $E_u$ is a vector bundle if and only if $k_i = r$ for all $i= 1 ,\dots,n-1$ which in turn happens if and only if $\sigma_i$ is isomorphism for all $i = 1,\dots, n-1$.
\end{proof} 

\begin{theorem}\label{Section-5}
Let $E = E_u$ be the sheaf defined by  $u = ([E_1],\dots,[E_n]; [\sigma_1],\dots, [\sigma_{n-1}]) \in  \mathbb{P}$. Let $\chi_i$ denote the Euler characteristic of $E_i$ and 
$w=(w_1, \dots, w_n)$ be a polarization. Suppose that the inequalities 
\[
(\sum\limits_{i=1}^j w_i)\chi - \sum\limits_{i=1}^{j-1}\chi_i + r(j-1) < \chi_j < (\sum\limits_{i=1}^j w_i)\chi - \sum\limits_{i=1}^{j-1}\chi_i + rj,
\]
hold for $1\leq j \leq n-1$, and that all $\sigma_i$ are isomorphisms. Then $E$ is a $w$-stable vector bundle.
\end{theorem}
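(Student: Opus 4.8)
The plan is to deduce the statement directly from part (2) of Theorem \ref{theorem of Bigas}, after checking that $E = E_u$ satisfies its hypotheses. Since every $\sigma_i$ is an isomorphism, Lemma \ref{Characterisation of vector bundle} already gives that $E$ is a vector bundle on $C$ of multirank $(r,\dots,r)$ with Euler characteristic $\chi(E) = \sum_{i=1}^{n}\chi_i - (n-1)r$, which is the number $\chi$ appearing in the inequalities. It therefore only remains to control the restrictions $E_{|C_i}$ and then invoke Teixidor's criterion.

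For the restrictions, I would first note that when all the $\sigma_i$ are isomorphisms the map $\tilde{\sigma}$ is surjective: each $\tilde{\sigma}_i$ is built from $\sigma_i\circ\rho_i$, which is surjective because $\rho_i$ is surjective and $\sigma_i$ is bijective, composed with the isomorphism $\rho_{p_i}'^{-1}$, so already $\tilde{\sigma}_i(\,\cdot\,,0)$ surjects onto the skyscraper $j_{p_i*}\bigl(j_{p_i}^*(j_{(i+1)*}E_{i+1})\bigr)\cong E_{i+1,q_i'}$. Hence $E$ fits in the short exact sequence
\[
0\to E\to\bigoplus_{i=1}^n j_{i*}(E_i)\xrightarrow{\ \tilde{\sigma}\ }\bigoplus_{i=1}^{n-1}j_{p_i*}\bigl(j_{p_i}^*(j_{(i+1)*}E_{i+1})\bigr)\to 0,
\]
whose cokernel is a torsion sheaf supported at the nodes. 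Comparing this with the canonical sequence \eqref{canonical sequence for the vector bundle E} for the vector bundle $E$, we conclude $E_{|C_i}\cong E_i$ for every $i$; in particular $\chi(E_{|C_i}) = \chi(E_i) = \chi_i$. (Equivalently, $E_u$ is by construction the bundle obtained by gluing $E_1,\dots,E_n$ across the nodes $p_i$ along the isomorphisms $\sigma_i$, so this identification is immediate.)

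Now each $E_i$ represents a point of $\mathcal{U}_{C_i}(r,d_i)$, and under the standing coprimality hypothesis on $(r,d_i)$ — equivalently because $\mathbb{P}$ is built over the moduli spaces of \emph{stable} bundles — each $E_i$ is a stable, hence semistable, vector bundle on $C_i$. By the previous paragraph the Euler characteristics of $E$ and of the $E_i$ are exactly $\chi$ and $\chi_i$, and the inequalities assumed in the statement are precisely the strict form of \eqref{polarization inequalities}. Hence Theorem \ref{theorem of Bigas}(2) applies: the (non-strict) inequalities already give that $E$ is $w$-semistable, and since the inequalities are strict for $1\le j\le n-1$ and at least one (in fact every) $E_i$ is stable, $E$ is $w$-stable, as claimed.

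As for the difficulty, there is no deep obstacle here: essentially all of the substance is packaged in Lemma \ref{Characterisation of vector bundle} and Theorem \ref{theorem of Bigas}(2). The only point that needs a careful word is the identification $E_{|C_i}\cong E_i$, i.e. that the defining sequence of $E_u$ coincides with the canonical sequence \eqref{canonical sequence for the vector bundle E} of $E_u$; this is exactly where the hypothesis that the $\sigma_i$ are isomorphisms is used (both for surjectivity of $\tilde{\sigma}$ and for the full-rank gluing at each node), and once it is in place the theorem follows in one line from Teixidor's criterion.
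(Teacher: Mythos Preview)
Your proof is correct and follows exactly the same route as the paper: the paper's own argument is the two-liner ``$\sigma_i$ isomorphisms $\Rightarrow$ $E$ is a vector bundle by Lemma \ref{Characterisation of vector bundle}, then apply Theorem \ref{theorem of Bigas}(2).'' You have simply (and usefully) spelled out the implicit step that $E_{|C_i}\cong E_i$ with each $E_i$ stable, which is exactly what is needed to feed into Teixidor's criterion.
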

\begin{proof}
 Since $\sigma_i$ are isomorphisms, $E$ is a vector bundle (by Lemma \ref{Characterisation of vector bundle}).  So by Theorem \ref{theorem of Bigas}(2), the result follows.
\end{proof}

The next result is not needed for the proof of our main theorems, but it does give some information about those $E_u$ which are not vector bundles.

\begin{theorem}\label{lemma regarding the existance of the bigas inequality}
Let $E = E_u$ be the sheaf defined by  $
u = ([E_1],\dots,[E_n]; [\sigma_1],\dots, [\sigma_{n-1}]) \in  \mathbb{P}$ with $\text{rank}(\sigma_i) = k_i$. Let $E$ be $w$-semistable with respect to a polarization $w= (w_1, \dots, w_n)$. Then the following conditions are satisfied
\begin{align}\label{eqn:1}
(\sum\limits_{i =1}^{j} w_i)  \chi - \sum\limits_{i=1}^{j-1} \chi_i -\sum\limits_{i = j+1}^{n-1} k_i + (j-1)r \leq \chi_j \leq (\sum\limits_{i =1}^{j} w_i)  \chi - \sum\limits_{i=1}^{j-1} \chi_i + \sum\limits_{i=1}^{j} k_i + (j-1)r,
\end{align}
where $j = 1,\dots,n-1$. Moreover, if $E$ is $w$-stable, then the inequalities hold strictly.
\begin{proof}
Since $E$ is $w$-semistable and $\tilde{K_i}$ are subsheaves of $E$, we have: 
\[
\mu_w (\tilde{K_i}) \leq \mu_w (E) \quad (1\leq i \leq n).
\]

This gives
\begin{align}\label{inquality  the first when w stable vector bundle}
\chi_1  \leq  w_1 \chi + k_1  \cr
\chi_n  \leq  w_n \chi + r.
\end{align}
and 
\begin{align}\label{inquality  the second when w stable vector bundle}
\chi_l \leq  w_l \chi + r + k_l\quad (2 \leq l \leq n-1).
\end{align}

For $j \in \lbrace 1 , \dots , n-1 \rbrace$, we rewrite \eqref{equation on computing euler seq of vb on chain-like curve},
\begin{align}\label{equation computing Euler characteristics of chain-like curve staring from L}
\chi_j = 
\chi - \sum_{\substack{i = 1,\\ i \neq j}}^{n} \chi_i + (n-1)r. 
\end{align}
From \eqref{inquality  the first when w stable vector bundle} and \eqref{inquality  the second when w stable vector bundle}, we have
\begin{align}
\sum\limits_{i = j+1}^n \chi_i \leq (\sum\limits_{i= j+1}^n w_i) \chi + \sum\limits_{i = j+1}^{n-1}k_i + (n-j)r.
\end{align}
Substituting the values of $\sum\limits_{i = j+1}^n \chi_i$ in \eqref{equation computing Euler characteristics of chain-like curve staring from L} we get,
\begin{equation*} \label{eq1}
\begin{split}
 \chi_j & \geq \chi - \sum\limits_{i =1}^{j-1} \chi_i - (\sum\limits_{i= j+1}^n w_i) \chi - \sum\limits_{i = j+1}^{n-1}k_i +(j-1)r  \\
 & = (1 - \sum\limits_{i = j +1}^n w_i) \chi - \sum\limits_{i =1}^{j-1} \chi_i - \sum\limits_{i = j +1}^{n-1} k_i + (j-1)r  \\
 & = (\sum\limits_{i=1}^j w_i) \chi - \sum\limits_{i =1}^{j-1} \chi_i - \sum\limits_{i = j+1}^{n-1} k_i + (j-1)r. 
 \end{split} \end{equation*}
This establishes the left hand side inequality of \eqref{eqn:1} for $j = 1, \dots, n-1$. Now, we prove the right hand side inequality of \eqref{eqn:1} i.e., 
\[ 
\chi_j \leq (\sum\limits_{i =1}^{j} w_i)  \chi - \sum\limits_{i=1}^{j-1} \chi_i + \sum\limits_{i=1}^{j}k_i + (j-1)r.
\]
From \eqref{inquality  the first when w stable vector bundle} and \eqref{inquality  the second when w stable vector bundle}, we get 
\begin{eqnarray*}
\sum\limits_{i=1}^{j}{\chi_i} \leq \chi(\sum\limits_{i=1}^{j}w_i) + \sum\limits_{i=1}^{j}k_i + (j-1)r.
\end{eqnarray*}
This implies 
\[
\chi_j \leq \chi (\sum\limits_{i=1}^{j}w_i) -\sum\limits_{i=1}^{j-1}\chi_i+ \sum\limits_{i=1}^{j}k_i + (j-1)r.
\]
The same proof with strict inequalities holds when $E$ is $w$-stable.
\end{proof}
\end{theorem}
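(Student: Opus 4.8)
The plan is to test the $w$-semistability of $E = E_u$ against the subsheaves $\tilde K_1,\dots,\tilde K_n \subset E_u$ that were constructed just above the statement, and then to repackage the resulting slope bounds by means of the Euler relation $\chi = \sum_{i=1}^n \chi_i - (n-1)r$ from \eqref{equation on computing euler seq of vb on chain-like curve}. Since the $\tilde K_i$ and the relevant exact sequences are already at our disposal, the argument is really just ``apply the definition of semistability and do the arithmetic''.

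First I would record the invariants of the $\tilde K_i$. Each $\tilde K_i$ sits inside $j_{i*}E_i$ and is generically of full rank $r$ on $C_i$, hence it has multirank $(0,\dots,0,r,0,\dots,0)$ (the $r$ in the $i$-th slot) and $\mu_w(\tilde K_i) = \chi(\tilde K_i)/(w_i r)$. From the short exact sequence $0 \to \tilde K_i \to j_{i*}E_i \to E_{i,q_{i-1}'}\oplus \text{Im}(\sigma_i)\to 0$ one gets $\chi(\tilde K_i) = \chi_i - r - k_i$ for $2 \le i \le n-1$, and the two endpoints give $\chi(\tilde K_1) = \chi_1 - k_1$ (as $\tilde K_1 = K_1$) and $\chi(\tilde K_n) = \chi_n - r$ (as $\tilde K_n = K_n'$). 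On the other side, $E_u$ has multirank $(r,\dots,r)$ by Lemma \ref{Characterisation of vector bundle} and $\sum_j w_j = 1$, so $\mu_w(E_u) = \chi/r$; moreover each $\tilde K_i$ is a proper subsheaf of $E_u$, being zero on every component other than $C_i$ while $n \ge 2$.

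Now $w$-semistability yields $\mu_w(\tilde K_i) \le \mu_w(E_u)$ for each $i$, i.e.\ after clearing denominators $\chi_1 \le w_1\chi + k_1$, $\chi_n \le w_n\chi + r$, and $\chi_l \le w_l\chi + r + k_l$ for $2 \le l \le n-1$. The two halves of \eqref{eqn:1} then follow by pure summation. For the upper bound at a fixed $j \in \{1,\dots,n-1\}$, add the bounds for $\chi_1,\dots,\chi_j$ to get $\sum_{i=1}^j \chi_i \le (\sum_{i=1}^j w_i)\chi + \sum_{i=1}^j k_i + (j-1)r$ and subtract $\sum_{i=1}^{j-1}\chi_i$. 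For the lower bound, write $\chi_j = \chi - \sum_{i=1}^{j-1}\chi_i - \sum_{i=j+1}^n \chi_i + (n-1)r$, bound $\sum_{i=j+1}^n \chi_i$ from above by adding the bounds for $\chi_{j+1},\dots,\chi_n$, and simplify using $1 - \sum_{i=j+1}^n w_i = \sum_{i=1}^j w_i$. If $E_u$ is $w$-stable, then each proper subsheaf $\tilde K_i$ satisfies the strict inequality $\mu_w(\tilde K_i) < \mu_w(E_u)$, so every inequality above, hence \eqref{eqn:1}, becomes strict.

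I do not anticipate a genuine obstacle: the only points needing care are bookkeeping ones — getting $\chi(\tilde K_i)$ right, in particular treating $i = 1$ and $i = n$ separately because at the two ends one of the two maps $\rho_i'$, $\sigma_i\circ\rho_i$ is absent, and telescoping the partial sums $\sum_{i=1}^j(\cdot)$ and $\sum_{i=j+1}^n(\cdot)$ against the Euler relation without sign errors. Conceptually, \eqref{eqn:1} is the analogue, for the possibly non-locally-free sheaves $E_u$, of the necessary-conditions half of Theorem \ref{theorem of Bigas}(1), with the rank defects $r - k_i$ of the gluing maps entering as the extra slack.
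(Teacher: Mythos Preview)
Your proposal is correct and follows essentially the same route as the paper's proof: both test semistability against the subsheaves $\tilde K_i$, extract the same three basic inequalities $\chi_1 \le w_1\chi + k_1$, $\chi_n \le w_n\chi + r$, $\chi_l \le w_l\chi + r + k_l$, and then obtain the two sides of \eqref{eqn:1} by summing over $i=1,\dots,j$ for the upper bound and over $i=j+1,\dots,n$ (combined with the Euler relation) for the lower bound. Your write-up in fact supplies a bit more detail than the paper does on the computation of $\chi(\tilde K_i)$ and on why the $\tilde K_i$ are proper.
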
 

\begin{remark}
For $n=2$, the inequalities \eqref{eqn:1} coincide with those of \cite[Lemma 3.3]{Brivio-Favale-VB-2020}.
\end{remark}
 
At this stage it is natural to ask if the converse of the previous theorem holds, at least under some assumptions 
on $E_i$ (as it is done in \cite[Proposition 3.6]{Brivio-Favale-VB-2020} for the case $n=2$).   
We answer this in Proposition \ref{converse}. We first recall the following definition from  \cite{Narasimhan-Ramanan-1978}.

\begin{definition}
Let $G$ be a vector bundle on a smooth curve. For any integer $k$ we set 
\[
\mu_k(G) := \frac{\deg(G)+k}{\text{rk}(G)}.
\]
We say that $G$ is $(m,k)$-semistable (resp. stable) if for every subsheaf $F$ of $G$, 
\[
\mu_k(F) \leq \mu_{m-k}(G) \quad (\text{resp.} \; \mu_k(F)<\mu_{m-k}(G)).
\] 
\end{definition}
\noindent Note that if $G$ is $(m,k)$-semistable, then so is $G \otimes L$, for any line bundle $L$. 
\begin{proposition}\label{converse}
Let $E = E_u$ be the sheaf defined by 
$u = ([E_1],\dots,[E_n]; [\sigma_1],\dots, [\sigma_{n-1}]) \in  \mathbb{P}$ with $\text{rank}(\sigma_i) = k_i$. Let $\chi_i$ denote 
the Euler characteristic of $E_i$ and $w = (w_1,...,w_n)$ be a polarisation such that 
\begin{eqnarray}\label{hypothesis}
\chi_1 \leq w_1\chi + k_1, \;  \; \chi_n \leq w_n\chi + r \cr
\chi_i \leq w_i\chi + k_i + r \; (2 \leq i \leq n-1).
\end{eqnarray}
Assume that $E_1$ is $(0,k_1)$-semistable, $E_n$ is $(0,r)$-semistable and $E_i$ is $(0,k_i+r)$-semistable for $2\leq i \leq n-1$. Then $E$ is $w$-semistable.
\end{proposition}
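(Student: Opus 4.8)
The plan is to verify the slope inequality $\mu_w(F)\le\mu_w(E)$ for every nonzero proper subsheaf $F\subseteq E$ by first reducing to subsheaves of a very explicit shape. Since $E$ has multirank $(r,\dots,r)$ and $\sum_j w_j=1$, we have $\mu_w(E)=\chi/r$. Replacing $F$ by its saturation only enlarges $\chi(F)$ while keeping the multirank, so we may assume $F$ is saturated. For such $F$ put $F_i=F|_{C_i}/\mathrm{Torsion}\subseteq E_i$ and let $G_i\subseteq E_i$ be its saturation, a subbundle of rank $s_i$. Using the inclusion $E\hookrightarrow\bigoplus_i j_{i*}E_i$ from Definition \ref{def of E_u}, one checks that the image in $E_i$ of any local section of $F$ lies in $F_i\subseteq G_i$; hence $F\subseteq\widehat F:=\bigl(\bigoplus_i j_{i*}G_i\bigr)\cap E$, and $\widehat F$ has the same multirank $(s_1,\dots,s_n)$ as $F$. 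So it suffices to prove $\mu_w(\widehat F)\le\mu_w(E)$ for all such $\widehat F$; the case $s_i=r$ for all $i$ is trivial, as then $\widehat F=E$.

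The key computation is $\chi(\widehat F)$. Writing $\widehat F=\ker\bigl(\bigoplus_i j_{i*}G_i\xrightarrow{\widetilde\sigma}\bigoplus_{i=1}^{n-1}j_{p_i*}(\cdots)\bigr)$, the $p_i$-component of $\widetilde\sigma$ depends only on the evaluations $u_i(q_i)$ and $u_{i+1}(q_i')$, and since on each $C_i$ the two relevant evaluation points $q_{i-1}'$ and $q_i$ are distinct, the contributions of the various nodes are independent and the image of $\widetilde\sigma$ equals $\bigoplus_{i=1}^{n-1}W_i$ with $W_i:=\sigma_i(G_{i,q_i})+G_{i+1,q_i'}\subseteq E_{i+1,q_i'}$. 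Therefore
\[
\chi(\widehat F)=\sum_{i=1}^{n}\chi(G_i)-\sum_{i=1}^{n-1}\dim W_i,
\]
which for $G_i=E_i$ recovers $\chi(\widehat F)=\chi$ as in Lemma \ref{Characterisation of vector bundle}. I would then combine three inputs: (i) $(0,m_i)$-semistability of $E_i$, where $m_1=k_1$, $m_n=r$, $m_i=k_i+r$ for $2\le i\le n-1$, which gives $\chi(G_i)\le\frac{s_i}{r}\chi_i-\frac{m_i(s_i+r)}{r}$ whenever $G_i\subsetneq E_i$; (ii) the hypotheses \eqref{hypothesis}, which are precisely the inequalities $\mu_w(\widetilde K_i)\le\mu_w(E)$, i.e. $\chi_i-m_i\le w_i\chi$, for the subsheaves $\widetilde K_i$ introduced above; and (iii) the elementary bounds $\dim W_i\ge\max(s_i+k_i-r,\,s_{i+1})$, with $\dim W_i=r$ whenever $G_{i+1}=E_{i+1}$ and $\dim W_i\ge k_i$ whenever $G_i=E_i$. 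Together with the identity $\sum_i\chi_i=\chi+(n-1)r$, these should yield $\chi(\widehat F)\le\mu_w(E)\sum_j w_j s_j$, i.e. $\mu_w(\widehat F)\le\mu_w(E)$.

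The step I expect to be the real work is carrying out this last estimate uniformly, and it amounts to a case analysis on the set $S=\{\,i:s_i=r\,\}$ of components where $G_i$ is all of $E_i$. On those components $(0,m_i)$-semistability says nothing, so $\sum_{i\in S}(\chi_i-w_i\chi)$ must be bounded using only \eqref{hypothesis} and $\sum_i\chi_i=\chi+(n-1)r$ — the latter forcing $\sum_{i\in S}(\chi_i-w_i\chi)=(n-1)r-\sum_{i\notin S}(\chi_i-w_i\chi)$ — while the ``cost'' $\sum_i\dim W_i$ must be shown to absorb the full-rank components, using that $\dim W_i=r$ at every node adjacent to $S$. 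This bookkeeping is what knits the whole chain together and is the source of the extra difficulty over the two-component case \cite[Proposition 3.6]{Brivio-Favale-VB-2020}; I expect the cleanest organization to be an induction on $n$ that peels off the two end components, but a direct term-by-term comparison on $\widehat F$ may also suffice.
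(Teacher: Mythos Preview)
Your reduction to $\widehat F=(\bigoplus_i j_{i*}G_i)\cap E$ with $G_i\subseteq E_i$ saturated subbundles, together with the formula $\chi(\widehat F)=\sum_i\chi(G_i)-\sum_i\dim W_i$, is correct, but it forces you into the case analysis on $S=\{i:s_i=r\}$ that you yourself flag as the ``real work'' and do not carry out. The bounds you list in (iii) on $\dim W_i$ are not by themselves enough to close the estimate uniformly in $S$; one needs in addition facts such as $s_i'\le k_i$ (the free rank of $E$ at $p_i$) and careful use of the identity $\sum_i\chi_i=\chi+(n-1)r$ to handle full-rank components. So as it stands the proposal is an outline with the decisive step missing.

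The paper avoids this bookkeeping entirely by a different decomposition of $F$. Instead of passing to subbundles $G_i\subseteq E_i$, it records the local shape $F_{p_i}\cong\mathcal{O}_{p_i}^{s_i'}\oplus\mathcal{O}_{q_i}^{a_i}\oplus\mathcal{O}_{q_i'}^{b_i}$ and writes a short exact sequence
\[
0\longrightarrow\bigoplus_{i=1}^n G_i\longrightarrow F\longrightarrow\bigoplus_{i=1}^{n-1}\mathbb{C}_{p_i}^{s_i'}\longrightarrow 0,
\]
where now $G_i:=\ker\bigl((\rho_i',\sigma_i\circ\rho_i)\big|_{j_{i*}F_i}\bigr)\subset\tilde K_i$. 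The point is that each $G_i$ sits inside the elementary transform $\tilde K_i$ (equivalently, inside $E_i(-q_{i-1}')$ for $i\ge 2$), so the $(0,m_i)$-semistability hypothesis bounds $\deg G_i/s_i$ directly, with no need to separate the case $G_i=E_i$. A straight computation then gives
\[
\chi(F)\le\sum_{i=1}^n s_iw_i\,\mu_w(\tilde K_i)+\sum_{i=1}^{n-1}(s_i'-s_{i+1}),
\]
and one concludes from $\mu_w(\tilde K_i)\le\mu_w(E)$ (which is exactly hypothesis \eqref{hypothesis}) together with $s_i'\le s_{i+1}$. So the paper's choice of tracking the free ranks $s_i'$ and placing the $G_i$ inside $\tilde K_i$, rather than tracking your images $W_i$ and placing $G_i$ inside $E_i$, is precisely what dissolves the case analysis you anticipated.
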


\begin{proof}
Let $F$ be a subsheaf of $E$ such that the stalks of $F$ at the nodes $p_i$ are 
$\mathcal{O}_{p_i}^{s_i'} \oplus \mathcal{O}_{q_i}^{a_i} \oplus \mathcal{O}_{q_i'}^{b_i}$. To prove the theorem 
we need to show $\mu_w(F) \leq \mu_w(E)$. Let $s_i = s_i' + a_i$ for 
$1\leq i \leq n-1$ and $s_n = s_{n-1}'+b_{n-1}$. Note that 
$s_i' + b_i = s_{i+1}$ for $1\leq i \leq n-1$. We then see that multirank of $F$ is $(s_1,...,s_n)$. Now by 
construction of $E$, for each $1\leq i\leq n$, there exists a vector bundle $F_i$ such that $F_i \subset E_i$ and $F$ 
is the kernel of 
\[
\tilde{\sigma}|_{\oplus_{i=1}^n j_{i_*}(F_i)} : \oplus_{i=1}^n j_{i_*}(F_i) \rightarrow \oplus_{i=1}^{n-1} j_{p_{i_*}}(j_{p_i}^*(j_{p_{{i+1}_*}}E_{i+1})).
\]
From the commutative diagram (just before Definition \ref{def of E_u}), we get the following short exact sequence:
\begin{eqnarray}\label{short exact seq for converse}
0\rightarrow \oplus_{i=1}^n G_i \rightarrow F \rightarrow \oplus_{i=1}^{n-1} \mathbb{C}_{p_i}^{s_i'} \rightarrow 0,
\end{eqnarray}
where $G_i = \ker(\rho_i'|_{j_{i_*}(F_i)}, \sigma_i\circ \rho_i|_{j_{i_*}(F_i)}) \subset \tilde{K_i}$. Now since $E_1$ is $(0,k_1)$-semistable and $G_1$ 
is a subsheaf of $E_1$, we have
\[
\frac{\deg(G_1)}{s_1} \leq \frac{d_1-k_1}{r}.
\]
For $2\leq i \leq n-1$, $E_i$ is $(0,k_i+r)$-semistable implies that $E_i(-q_{i-1}')$ is also $(0,k_i+r)$-semistable. As $G_i$ is a subsheaf of $E_i(-q_{i-1}')$, for $2 \leq i \leq n-1$ we have
\[
\frac{\deg(G_i)}{s_i} \leq \frac{d_i-k_i-2r}{r}.
\]
Also, $E_n$ is $(0,r)$-semistable implies $E_n(-q_{n-1}')$ is $(0,r)$-semistable. Now, since $G_n$ is a subsheaf of $E_n(-q_{n-1}')$, we have
\[
\frac{\deg(G_n)}{s_n} \leq \frac{d_n-2r}{r}.
\]
Now from \eqref{short exact seq for converse}, we have 
\begin{eqnarray*}
\chi(F) &=& \chi(\oplus_{i=1}^n G_i) + \sum_{i=1}^{n-1}s_i' \\
&=& \sum_{i=1}^{n}(\deg(G_i) + s_i(1-g_i)) + \sum_{i=1}^{n-1}s_i' \\
&\leq& s_1w_1\frac{(d_1-k_1)+r(1-g_1)}{w_1r} + s_2w_2\frac{(d_2-k_2-2r)+r(1-g_2)}{w_2r} + \cdots  \\
&& +\; s_{n-1}w_{n-1}\frac{(d_{n-1}-k_{n-1}-2r)+r(1-g_{n-1})}{w_{n-1}r} 
+ s_nw_n\frac{(d_n-2r)+r(1-g_n)}{w_nr} \\
&&  + \sum_{i=1}^{n-1}s_i' \\
&=& s_1w_1\mu_w(\tilde{K_1}) + s_2w_2\mu_w(\tilde{K_2}) - s_2 +...+ s_nw_n\mu_w(\tilde{K_n}) - s_n + \sum_{i=1}^{n-1}s_i' \cr
&=& \sum_{i=1}^n s_iw_i\mu_w(\tilde{K_i}) + \sum_{i=1}^{n-1}(s_i'-s_{i+1}) \cr
&\leq& \sum_{i=1}^n s_iw_i\mu_w(E) + \sum_{i=1}^{n-1}(s_i'-s_{i+1}), 
\end{eqnarray*}
where the last inequality follows from the inequalities \eqref{hypothesis}.
Using this and the fact that $\sum_{i=1}^{n-1}(s_i'-s_{i+1}) < 0$, we have 
\begin{eqnarray*}
\mu_w(F) &\leq& \mu_w(E) + \frac{\sum_{i=1}^{n-1}(s_i'-s_{i+1})}{\sum_{i=1}^ns_iw_i} \cr
&\leq&  \mu_w(E).
\end{eqnarray*}
Note that if any of $E_i$ is $(0,k_i)$-stable or $E_n$ is $(0,r)$-stable, the above inequality is strict and hence $E$ is $w$-stable.
\end{proof}

\begin{remark}
Note that the inequalities \eqref{hypothesis} imply the inequalities \eqref{eqn:1}, but they are not equivalent 
to each other. The above Proposition is only a partial converse to Theorem 
\ref{lemma regarding the existance of the bigas inequality}.
\end{remark}
 
\begin{comment}
 \begin{corollary}
In the above theorem, if $\sum\limits_{i=1}^{n-1}k_i < (n-2)r$, then $E_u$ is strongly unstable.
\end{corollary}
\begin{proof}
Follows from the inequality \ref{eqn:1}.
\end{proof}
 
 \begin{remark}
Under the hypothesis of Theorem \eqref{lemma regarding the existance of the bigas inequality}, we have the 
following inequalities for $1\leq j \leq n$:
\begin{align}\label{eqn:2}
(\sum\limits_{i =1}^{j} w_i)  \chi - \sum\limits_{i=1}^{j-1} \chi_i +(j-1) r \leq \chi_j \leq (\sum\limits_{i =1}^{j} w_i)  \chi - \sum\limits_{i=1}^{j-1} \chi_i + j r.
\end{align}
To see this, note that $k_i \leq r$, which gives 
\[
\sum\limits_{i =j+1}^{n-1}-k_i \geq \sum\limits_{i =j +1}^{n-1} -r = -(n-j-1)r.
\]
Therefore, together with (\ref{eqn:1}) we obtain

\begin{align*}
\sum\limits_{i =j+1}^{n-1} k_i + (n-2)r  \geq \sum\limits_{i =j +1}^{n-1} r = -(n-j-1) + (n-2)r  = (j-1)r
\end{align*}
Thus,

\begin{align}
(\sum\limits_{i=1}^j w_i) \chi - \sum\limits_{i =1}^{j-1} \chi_i + (j-1)r \leq \chi_j,
\end{align} 
for $j = 1 , \dots, n$. For the other inequality, note that from \eqref{inquality  the first when w stable vector bundle} we have 
\begin{align*}
\sum\limits_{i=1}^j \chi_i \leq \chi(\sum\limits_{i=1}^j w_i) + jr,
\end{align*}
which further implies
\[
\chi_j \leq \chi(\sum\limits_{i=1}^j w_i) -\sum\limits_{i=1}^{j-1}\chi_i + jr.
\]
It is also important to note that Theorem \ref{theorem of Bigas}{(1)} is valid for $E=E_u$ even when $E$ is not locally free.
\end{remark} 
\end{comment}

\section{Rationality of Moduli space of vector bundles}
In this section, we prove the main results of this paper. Let $ \mathbb{P}$ be the fibre product on $\mathcal{U}_{C_1}(r, d_1) \times \cdots \times \mathcal{U}_{C_n}(r, d_n)$  (defined in Theorem \ref{theorem on exists on poincare bundle on chain-like curve}). For $1\leq k \leq r-1$, let $\mathscr{B}_{i,k} \subset \mathbb{P}$ be such that
\begin{align*}
\mathscr{B}_{i,k} \cap \pi^{-1}([E_1],\dots, [E_n]) = \lbrace ([\sigma_1], \dots , [\sigma_{n-1}]) \in  \mathbb{P} \vert rk(\sigma_i) \leq k \rbrace,
\end{align*} 
is a proper closed subvariety of $ \mathbb{P}$. We then define $\mathscr{B}_{k} := \cup_{i=1}^{n-1}\mathscr{B}_{i,k}$.
\begin{definition}
Let  $\mathscr{U}$ be the set defined by the complement of $\mathscr{B}_{r-1}$ in $\mathbb{P}$. 
\end{definition}
Note that $\mathscr{U}$ 
is the open set for which $E_u$ is a vector bundle.

\begin{remark}\label{remark to compute the dim}

 Let $\pi_{\vert_\mathscr{U}}$ be the restriction of $\pi$ to $\mathscr{U}$. Then 
%\begin{align*}
$\pi_{\vert_\mathscr{U}} : \mathscr{U} \rightarrow \mathcal{U}_{C_1}(r, d_1) \times \cdots \times \mathcal{U}_{C_n}(r, d_n)$
%\end{align*}
is a bundle, and by construction the fibre of $\pi_{\vert_\mathscr{U}}$ at $([E_1], \dots , [E_n])$  is 
\[
 \text{PGL}(E_{1,q_1}, E_{2, q_1'}) \times \cdots  \times \text{PGL}(E_{n-1,q_{n-1}}, E_{n, q_{n-1}'}).
 \]
Since the rank of $ \text{PGL}(E_{i,q_i}, E_{i+1, q_i'})$ is $r^2-1$, we have the following:
\begin{comment}
\begin{eqnarray*}
\text{dim}\mathscr{U} &=& r^2\left(\sum\limits_{i=1}^n g_i -n\right) + n + \text{dimension of fibre} \\
&=& r^2\left(\sum\limits_{i=1}^n g_i -n\right) + n + (r^2-1)(n-1) \\
&=& r^2\left(\sum\limits_{i=1}^n g_i -1\right) + 1.
\end{eqnarray*}
\end{comment}
\[ 
\begin{split}
      \dim (\mathscr{U}) 
      & = r^2\left(\sum\limits_{i=1}^n g_i -n\right) + n + \text{dimension of fibre} \\
      & = r^2\left(\sum\limits_{i=1}^n g_i -n\right) + n + (r^2-1)(n-1) \\
      & =   r^2\left(\sum\limits_{i=1}^n g_i -1\right) + 1. 
\end{split}
\]
\end{remark} 

For $\chi  = \sum\limits_{i=1}^n d_i + r (1 - \sum\limits_{i =1}^n g_i)$, let $\mathcal{U}_C(w,r,\chi)_{d_1, \dots, d_n}$ be the irreducible component of the moduli space of $w$-stable pure sheaves of dimension one on a chain-like curve $C$ with rank $r$ and Euler characteristic $\chi$ corresponding to multidegree $(d_1, \dots,  d_n)$. Let $\mathcal{V}_C(w,r, \chi)_{d_1 ,\dots , d_n} \subset \mathcal{U}_C(w,r, \chi)_{d_1 ,\dots , d_n}$ be the subset parametrizing classes of vector bundles. This is an open subset 
and is therefore irreducible.

\begin{theorem}\label{theorem for the first birational map}
Let $C$ be a chain-like curve having $n$ smooth components $C_i$ with $(n-1)$ nodes $p_i$ and genus $g_i \geq 2$. Fix $r \geq 2$, and $d_i \in \mathbb{Z}$, for $i = 1,\dots, n$. Let $\chi_i = d_i + r (1 -g_i)$,  $\chi = \sum\limits_{i=1}^{n} d_i + r(1 - \sum\limits_{i=1}^{n} g_i)$ and $w = (w_1, \dots, w_n)$ be a polarization. Assume that $(\chi_1, \dots , \chi_n)$ and $w= (w_1 , \dots, w_n)$ satisfy the inequalities

\[
(\sum\limits_{i=1}^j w_i)\chi - \sum\limits_{i=1}^{j-1}\chi_i + r(j-1) < \chi_j < (\sum\limits_{i=1}^j w_i)\chi - \sum\limits_{i=1}^{j-1}\chi_i + rj,
\]

for $1\leq j \leq n-1$. Then the map
 
\[
\varphi :  \mathbb{P}\dashrightarrow \mathcal{U}_{C}(w,r, \chi)_{d_1, \dots , d_n}
\]
defined by
\[
u \mapsto [E_u]
\]
is birational. Moreover, $\varphi_{\vert_\mathscr{U}}$ is an injective morphism on $\mathscr{U}$.
\begin{proof}
Let $u = ([E_1],\dots, [E_n]; [\sigma_1], \dots , [\sigma_{n-1}]) \in \mathbb{P}$, and let 
$E_u$ be the sheaf defined by $u$ (see Section \ref{section on Chain-like curves}). By definition of $\mathscr{U}$, $\varphi_{\vert_\mathscr{U}}$ is well defined on $\mathscr{U}$.
\\ \\
\underline{(a) To prove $\varphi$ is injective on $\mathscr{U}$:} \\
Let $u  = ([E_1],\dots, [E_n]; [\sigma_1], \dots , [\sigma_{n-1}])$ and $u'  = ([E_1'],\dots, [E_n']; [\sigma_1'], \dots , [\sigma_{n-1}'])$ $\in \mathscr{U}$ such that  $[E_u] = [E_{u'}] \in \mathcal{U}_{C}(w,r, \chi)_{d_1, \dots , d_n}$. Therefore, $E_u$ and $E_{u'}$ are $S_w$-equivalent.    
Since  the $S_w$-equivalent bundles $E_u$ and $E_{u'}$ are $w$-stable, they are also isomorphic. Let $\alpha : E_u \rightarrow E_{u'}$ be an isomorphism. Therefore, we have an induced isomorphism $\alpha_i : E_i \rightarrow E_i'$, for each $i = 1, \dots , n$. Now we have the following diagram,
\[
\xymatrix{
E_{i, q_i} \ar[rr]^{\sigma_i} \ar[dd]^{(\alpha_i)_{q_i}} && E_{i+1, q_i'} \ar[dd]^{(\alpha_{i+1})_{q_i'}}
\\ \\
E_{i. q_i}' \ar[rr]_{\sigma_i'} && E_{i+1, q_i'}
}
\]
Since $E_i$ and $E_i'$ are stable bundles, $Hom (E_i, E_i') \cong \mathbb{C}$. Note that $(\alpha_i)_{q_i}$ and $(\alpha_{i+1})_{q_i'}$ are scalar linear maps. By using the commutative diagram above, $\sigma_i'$ is a scalar multiple of $\sigma_i$ for each $i = 1, \dots, n-1$. Therefore, $[u]$ = $[u']$ and hence $\varphi$ is injective on $\mathscr{U}$.
\\
\\
\underline{(b) To prove $\varphi_{|_\mathscr{U}}$ is a morphism}:
Assume first that $(r,d_i)=1$ for all $i$. To show this is a morphism, it  suffices to show that $\varphi$ is regular at every point of  
$\mathscr{U}$. Let $u_0 \in \mathscr{U}$. Indeed, we find  an open set $W \subset \mathscr{U}$ 
containing $u_0$ and a vector bundle $\mathcal{E}$ on $\mathscr{U} \times C$ such that 
\[
\mathcal{E}_{|_{x \times C}} = \varphi(x),
\]
for all $x \in W$. \\
\underline{Step-1}: There are two sheaves $\mathcal{Q}$ and $\mathcal{R}$ on 
$\mathscr{U} \times C$ such that 
\[
\mathcal{Q}_{|_{x \times C}} \cong \bigoplus_{i=1}^{n}j_{i*}(E_i) \quad \text{and}\quad \mathcal{R}_{|_{x \times C}} \cong \bigoplus_{i=1}^{n-1} j_{p_i *}(j_{p_i}^*(j_{i+1 *}(E_{i+1}))).
\]
Consider the following diagram for $q_i$, $\iota_i$ ($1\leq i \leq n-1$) and also for 
${q}_{i-1}'$, ${\iota}_{i-1}'$ ($2\leq i \leq n$), which we omit. 
For simplicity, we denote $\mathcal{U}_{C_i}(r, d_i)$ by $\mathcal{U}_{C_i}$ in the figure below: 
\[
\begin{tikzcd}[column sep={4.5em,between origins},row sep=2em]
&
\mathscr{U}
  \arrow[rr, leftarrow, "\cong"]
  \arrow[dl,swap, "J_{p_i}"]
  \arrow[dd,swap,]
&&
\mathscr{U} \times p_i
\\
\mathscr{U} \times C  \arrow[dd,swap,"\Pi_\mathscr{U}"]
\\
&
\mathcal{U}_{C_1} \times \cdots \times \mathcal{U}_{C_n}
  \arrow[rr,rightarrow, "\pi_i"]
  \arrow[dl,swap,]
&&
\mathcal{U}_{C_i}
  \arrow[dl,rightarrow,] \arrow[rr, leftarrow, "\cong"] 
&& 
\mathcal{U}_{C_i} \times q_i
 \arrow[dl,rightarrow, "id \times \iota_i"]
\\
\mathcal{U}_{C_1} \times \cdots \times \mathcal{U}_{C_n}  \times C
  \arrow[rr,rightarrow, "\Pi_i"]
&&
\mathcal{U}_{C_i} \times C
%  \arrow[uu,leftarrow,crossing over,swap,"\binom{i_{(X,\phi)\rtimes A}}{k_{(\cdots),(\cdots)}}" near end] 
 \arrow[rr, hookleftarrow, "J_i"]
  && 
  \mathcal{U}_{C_i} \times C_i
\end{tikzcd}
\]
Notational convenience: $\Pi_{i} = \pi_i \times \text{id}_C$, $\Pi_\mathscr{U} = \pi_{|_\mathscr{U}} \times \text{id}_C$, $J_{i} = \text{id}_{\mathscr{U}_{C_i}} \times j_i$.\\
Consider 
\begin{align}
\mathcal{Q}_i := \Pi_\mathscr{U}^*(\pi_i^* (J_{i*}\mathcal{P}_i)))\quad \text{and} \quad \mathcal{R}_i = J_{p_i *} (J_{p_i}^* (\mathcal{Q}_{i +1})).
\end{align}
Then set
\begin{align*}
\mathcal{Q} = \bigoplus_{i = 1}^n \mathcal{Q}_i \quad \text{and} \quad \mathcal{R} = \bigoplus_{i = 1}^{n-1} J_{p_i *} (J_{p_i}^* (\mathcal{Q}_{i +1})).
\end{align*}
Note that, $\text{Supp}(\mathcal{R})= \bigcup\limits_{i= 1}^{n-1} (\mathscr{U} \times p_i)$.
\\
\underline{Step-2}: There exists an open set $W$ of $\mathscr{U}$ containing $u_0$ and an onto map of sheaves
\begin{align}
\Omega_W : \mathcal{Q}_{|_{W \times C}} \rightarrow \mathcal{R}_{|_{W \times C}} 
\end{align}
such that $\text{Ker}(\Omega_W)= \mathcal{E}$.
Let $\pi : \mathbb{P} \rightarrow \mathcal{U}_{C_1}(r, d_1) \times \cdots \times \mathcal{U}_{C_n}(r, d_n)$ be the product of projective bundles (see Theorem \ref{theorem on exists on poincare bundle on chain-like curve}) and let 
$P_i$ denote the projection onto its $i$th factor $\mathbb{P}(\mathcal{F}_i)$. Let $\bigoplus\limits_{i=1}^{n-1}P_i^*(\mathcal{O}_{\mathbb{P}(\mathcal{F}_i)}(-1))$ be the direct sum of pullback of tautological line bundles $\mathcal{O}_{\mathbb{P}(\mathcal{F}_i)}(-1)$. Let $u = ([E_1],\dots, [E_n]; [\sigma_1], \dots , [\sigma_{n-1}])$, then the fibre of $\pi$ at $u$ is
\[
\text{Span}(\sigma_1, \dots , \sigma_{n-1}) \subset \bigoplus_{i =1}^{n-1} \text{Hom}(E_{i, q_i}, E_{i+1, q_i'}).
\]
Thus, given $u_0 \in \mathbb{P}$, there is an open set $W$ containing $u_0$ and set of sections $s_i \in \bigoplus\limits_{i=1}^{n-1}P_i^*(\mathcal{O}_{\mathbb{P}(\mathcal{F}_i)}(-1)(W))$ such that  $s_i(u) \neq 0$ for all $u \in W$.
This implies that the stalk of the map
\begin{align}\label{stalk map}
s_i : \Pi_\mathscr{U}^*(\pi_i^*(\iota_i^*(\mathcal{P}_i)))\vert_{W} \rightarrow \Pi_\mathscr{U}^*(\pi_i^*({\iota'}_{i+1}^*(\mathcal{P}_{i +1})))\vert_{W}
\end{align}
i.e., 
\[
(s_i)_u : E_{i, q_i} \rightarrow E_{i+1, q_i'}
\]
is an isomorphism, and $[(s_1)_u , \dots , (s_{n-1})_u] = ([\sigma_1], \dots , [\sigma_{n-1}])$ in $\mathbb{P}(\oplus_{i =1}^{n-1} \text{Hom}(E_{i, q_i}, E_{i+1, q_i'}))$. For convenience, we use the notation $\mathcal{T}_{i}$ (respectively $\mathcal{T}_{i+1}'$) for $\Pi_\mathscr{U}^*(\pi_i^*(\iota_i^*(\mathcal{P}_i)))$ (respectively $\Pi_\mathscr{U}^*(\pi_{i+1}^*({\iota'}_{i+1}^*(\mathcal{P}_{i+1})))$) for $i = 1, \dots , n-1$. Then $J_{p_i}^*(\mathcal{Q}_i) = \mathcal{T}_i$ for $i = 1, \dots , n-1$. Also, \eqref{stalk map} can be rewritten as 
\[
s_i : {\mathcal{T}_i}\vert_W \rightarrow {\mathcal{T}_{i+1}'}\vert_W.
\]
Define
\[
s_i - \text{id} : {\mathcal{T}_i}\vert_W \oplus {\mathcal{T}_{i+1}'}\vert_W \rightarrow {\mathcal{T}_{i+1}'}\vert_W,
\]
for $i = 1, \dots n-1$. Consider the following commutative diagram 
\[
\begin{tikzcd}[column sep={15.5em,between origins},row sep=3em]
{\mathcal{Q}_i \oplus \mathcal{Q}_{i+1}}\vert_{W \times C} \arrow{r}{\Omega_W^i} \arrow[swap]{d} & {\mathcal{R}_i}\vert_{W \times C} \arrow{d}{\varrho_x^g} \\
J_{p_i}^*(({\mathcal{Q}_i \oplus \mathcal{Q}_{i+1}})\vert_{W \times C}) \arrow{r} \arrow{d} & J_{p_i}^*({\mathcal{R}_i}\vert_{W \times C}) \arrow{d} \\
{\mathcal{T}_i}\vert_W \oplus {\mathcal{T}_{i+1}'}\vert_W \arrow{r}  & {\mathcal{T}_{i+1}'}\vert_W.
\end{tikzcd}
\]
We have the composite map 
\[
{\mathcal{Q}_i \oplus \mathcal{Q}_{i+1}}\vert_{W \times C} \rightarrow J^*_{p_i}({\mathcal{R}_i}\vert_{W \times C}),
\]
and since $\text{supp}({\mathcal{R}_i}_{\vert_{W \times C}})$ is $W \times p_i$, this is actually defined onto ${\mathcal{R}_i}_{\vert_{W \times C}}$ to give a map $\Omega_W^i$ for $i = 1, \dots, n-1$. Set $\Omega_W := \bigoplus_{i=1}^{n-1} \Omega_W^i$, then the required vector bundle is $\mathcal{E} = \text{Ker}(\Omega_W)$. By construction $\varphi(\mathscr{U}) \subset \mathcal{V}_C(w,r, \chi)_{d_1 , \dots , d_n}$, and it is equal to the open subset of $w$-semistable vector bundles whose restrictions are stable.
By  Remark \ref{remark to compute the dim},
\[
\dim(\varphi(\mathscr{U})) = \dim(\mathscr{U}) = r^2(\sum\limits_{i=1}^n g_i -1) +1,
\]
which  is equal to $\dim(\mathscr{U}(r, w, \chi)_{d_1, \dots , d_n})$. Therefore, $\varphi$ is a dominant map and hence $\varphi_{|_\mathscr{U}}$ is a birational morphism.

Without the coprimality assumption, it is sufficient to prove that $\varphi|_{\mathscr{U}}$ is a morphism at the Quot 
scheme level. The same proof now works.
\end{proof}

\end{theorem}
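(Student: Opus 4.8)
The plan is to show that $\varphi \colon \mathbb{P} \dashrightarrow \mathcal{U}_C(w,r,\chi)_{d_1,\dots,d_n}$ is birational by first establishing that its restriction $\varphi_{|_\mathscr{U}}$ to the open set $\mathscr{U}$ (where all $\sigma_i$ are isomorphisms, so $E_u$ is a vector bundle by Lemma \ref{Characterisation of vector bundle}) is a well-defined injective morphism, and then that it is dominant; since both source and target are irreducible varieties of the same dimension (Remark \ref{remark to compute the dim} and the dimension formula for $\mathcal{U}_C(w,r,\chi)_{d_1,\dots,d_n}$), a dominant injective morphism between irreducible varieties over $\mathbb{C}$ is birational. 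The injectivity in step (a) proceeds by unwinding the $S_w$-equivalence: since $\varphi(\mathscr{U})$ lands among $w$-stable bundles by Theorem \ref{Section-5}, an $S_w$-equivalence $E_u \cong E_{u'}$ is an honest isomorphism $\alpha$, which restricts to isomorphisms $\alpha_i \colon E_i \xrightarrow{\sim} E_i'$ forcing $[E_i]=[E_i']$; stability of each $E_i$ gives $\mathrm{Hom}(E_i,E_i')\cong\mathbb{C}$, so each $(\alpha_i)_{q_i}$ is a scalar, and chasing the commuting square relating $\sigma_i$ to $\sigma_i'$ through these scalars shows $\sigma_i'$ is a nonzero scalar multiple of $\sigma_i$, i.e. $[u]=[u']$.

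The technical heart is step (b): showing $\varphi_{|_\mathscr{U}}$ is a morphism, not merely a set map. The approach is to exhibit, locally around each $u_0\in\mathscr{U}$, an honest family of vector bundles realizing $\varphi$. Concretely I would build two sheaves $\mathcal{Q},\mathcal{R}$ on $\mathscr{U}\times C$ whose fibres over $u$ are $\bigoplus_i j_{i*}(E_i)$ and $\bigoplus_i j_{p_i*}(j_{p_i}^*(j_{i+1*}(E_{i+1})))$ respectively, by pulling back the Poincaré bundles $\mathcal{P}_i$ via the projections $\Pi_\mathscr{U},\pi_i,J_i$ and pushing forward along $J_{p_i}$. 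Then, using the tautological line bundles $\mathcal{O}_{\mathbb{P}(\mathcal{F}_i)}(-1)$ on the projective bundles, one finds on a suitable open $W\ni u_0$ nowhere-vanishing sections $s_i$ giving globally-defined bundle maps $s_i\colon \mathcal{T}_i|_W\to\mathcal{T}_{i+1}'|_W$ whose fibre at $u$ represents $[\sigma_i]$ (an isomorphism on $\mathscr{U}$); forming $s_i-\mathrm{id}$ and bundling these into a surjection $\Omega_W\colon \mathcal{Q}|_{W\times C}\twoheadrightarrow \mathcal{R}|_{W\times C}$, the kernel $\mathcal{E}:=\mathrm{Ker}(\Omega_W)$ is the desired family, flat over $W$ with $\mathcal{E}|_{x\times C}\cong E_x$. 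This produces a classifying morphism $W\to \mathcal{U}_C(w,r,\chi)_{d_1,\dots,d_n}$ agreeing with $\varphi$, and gluing over $\mathscr{U}$ gives the morphism.

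The main obstacle I anticipate is the descent/surjectivity bookkeeping in the diagram defining $\Omega_W$: one must check that the composite $(\mathcal{Q}_i\oplus\mathcal{Q}_{i+1})|_{W\times C}\to J_{p_i}^*(\mathcal{R}_i|_{W\times C})$, which a priori only maps to the pullback along the node, actually factors through $\mathcal{R}_i|_{W\times C}$ itself — this uses that $\mathcal{R}_i$ is supported exactly on $W\times p_i$ — and that the resulting map is surjective there (which is where $s_i$ being an isomorphism on $\mathscr{U}$, not just nonzero, is essential). Once surjectivity of $\Omega_W$ is secured, flatness of $\mathcal{E}$ over $W$ and the identification of fibres with $E_x$ follow from the pointwise computation already implicit in the construction of $E_u$. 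For the non-coprime case, where no Poincaré bundle exists, one replaces $\mathscr{U}$ by the corresponding open subset of the product of Quot schemes $\mathcal{R}_1\times\cdots\times\mathcal{R}_n$ (cf. Remark \ref{birational model without coprimality conditions}), runs the identical construction using the tautological quotient bundles there, and obtains a $\mathrm{PGL}$-equivariant morphism to the Quot scheme presenting the moduli space, which descends; the dimension count and injectivity arguments are unchanged.
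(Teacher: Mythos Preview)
Your proposal is correct and follows essentially the same approach as the paper's proof: the injectivity argument in (a) via stability and the scalar-endomorphism trick, the construction in (b) of $\mathcal{Q},\mathcal{R}$ from Poincar\'e bundles and the local family $\mathcal{E}=\mathrm{Ker}(\Omega_W)$ built out of tautological sections $s_i$, the dimension count for dominance, and the Quot-scheme reduction for the non-coprime case all match the paper's argument almost step for step. The technical point you flag about factoring through $\mathcal{R}_i$ using its support on $W\times p_i$ is exactly the one the paper invokes.
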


Let $C$ be a smooth complex projective curve of genus $g$. For a line bundle $L$ of degree $d$ on $C$, let $\mathcal{SU}_C(r, L)$ be the moduli space of semistable vector bundles on $C$ of rank $r$ and fixed determinant $L$. If $(r, d)=1$, $\mathcal{SU}_C(r, L)$ is known to be an irreducible projective subvariety of $\mathcal{U}_C(r, d)$. \\ \\%Note that $\mathcal{SU}_C(r, L) \subset \mathcal{U}_C(r,d)$
Let $L = (L_1, \dots , L_n)$ be a line bundle on a chain-like curve $C$, where $L_{\vert_{C_i}} = L_i$ be a line bundle on $C_i$ with $\deg{L_i}=d_i$. Let $\mathcal{SU}_{C_i}(r, L_i)$ be the fixed determinant moduli space of rank $r$ vector bundles on $C_i$ with determinant $L_i$. Consider the inclusion 
\begin{eqnarray}
\mathcal{SU}_{C_1}(r, L_1) \times \cdots \times \mathcal{SU}_{C_n}(r, L_n) \hookrightarrow  \mathcal{U}_{C_1}(r, d_1) \times \cdots \times \mathcal{U}_{C_n}(r, d_n).
\end{eqnarray} 
We have sheaves $\mathcal{F}_1, \dots , \mathcal{F}_{n-1}$ on the product $\mathcal{U}_{C_1}(r, d_1) \times \cdots \times \mathcal{U}_{C_n}(r, d_n)$ (see Theorem \ref{theorem on exists on poincare bundle on chain-like curve}). We denote by 
$\mathcal{F}_{i, L}$, the sheaves obtained by restricting the sheaves $\mathcal{F}_i$ on the product $\mathcal{SU}_{C_1}(r, L_1) \times \cdots \times \mathcal{SU}_{C_n}(r, L_n)$. Let
\begin{align*}
\mathscr{U}_L  := \mathscr{U} \cap \mathbb{P}_L,
\end{align*}
where $\mathbb{P}_L$ denotes the fibre product of the $\mathbb{P}(\mathcal{F}_{i,L})$. In the non-coprime case, we need to use Remark \ref{birational model without coprimality conditions}.
Let $\overline{\mathcal{SU}_C(w,r,\chi,L)}$ denote the closure of the collection of $w$-semistable vector bundles of rank $r$ in $\mathcal{U}_C(w,r,\chi)_{d_1,\dots,d_n}$ with determinant $L$, where $d_i = \deg(L_{\vert_{C_i}})$. \\
Let $E$ be a pure sheaf of dimension one on $C$ with multirank $(r_1,\dots,r_n)$. Tensoring the exact sequence \eqref{canonical exact sequence} by $E$, we get
\begin{equation}
0 \rightarrow E \rightarrow \bigoplus_{j=1}^n E_j \rightarrow \mathcal{T}_E \rightarrow 0,
\end{equation}
where $\mathcal{T}_E$ is supported only at nodes $p_i$. Moreover $h^0(\mathcal{T}_E)=  \sum\limits_{i=1}^{n-1} t_{p_i}$, where $t_{p_i}$ is the rank of the free part of the stalk $E_{p_i}$. Let $L= (L_1, \dots , L_n)$ be a line bundle on $C$. Then we have
\begin{eqnarray*}
\chi(E \otimes L) = \sum\limits_{i =1}^n \chi(E_i \otimes L)- \sum\limits_{i =1}^{n-1} t_{p_i}.
\end{eqnarray*}
The projection formula implies that $\chi(E_i \otimes L) = \chi(E_i \otimes L_i) = \chi(E_i) + r_i \deg(L_i)$. Therefore, 
\begin{eqnarray*}
\chi(E \otimes L) = \chi(E) + \sum\limits_{i=1}^{n} r_i \deg(L_i).
\end{eqnarray*}
In particular, if for each $i$, $\deg(L_i) = 0$, then $\chi(E \otimes L) = \chi(E)$. Therefore, in this case,
\begin{eqnarray}\label{equation when slop is same tensor with line bundle of deg 0}
\mu_w(E \otimes L)= \mu_w(E)
\end{eqnarray}
with respect to any polarization $w$.

\begin{lemma}\label{lemma on w-stability of vector bundle tensoring with line bundle}
Let $E$ be a pure sheaf of dimension one on a chain-like curve $C$, let $L$ be line bundle on $C$ with $\deg(L_{|_{C_i}}) = 0$. Suppose that $w$ is a polarization. Then $E$ is $w$-semistable (resp. $w$-stable) if and only if $E \otimes L$ is $w$-semistable (resp. $w$-stable).
\begin{proof}
Let $F$ be a proper subsheaf of $E \otimes L$. Then $F \otimes L^{-1}$ is a proper subsheaf of $E$. Since $E$ is $w$-semistable (resp. $w$-stable), $\mu_w(F \otimes L^{-1}) \leq \mu_w(E)$ (resp. $\mu_w(F \otimes L^{-1}) < \mu_w(E)$). By equation \eqref{equation when slop is same tensor with line bundle of deg 0}, $\mu_w(F) \leq \mu_w(E \otimes L)$ (resp. $\mu_w(F ) < \mu_w(E\otimes L)$). And vice versa.
\end{proof}
\end{lemma}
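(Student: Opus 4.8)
The plan is to use the fact that $-\otimes L$ is an exact auto-equivalence of the category of coherent sheaves on $C$ that carries the numerical data governing $w$-(semi)stability to itself, so that the whole statement becomes a formal consequence of the slope identity recorded just above in \eqref{equation when slop is same tensor with line bundle of deg 0}.

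First I would note that $E\otimes L$ is again a pure sheaf of dimension one with the same multirank $(r_1,\dots,r_n)$ as $E$: since $L$ is a line bundle, $(E\otimes L)|_{C_i}$ has torsion-free quotient isomorphic to $E_i\otimes L_i$, which has the same rank $r_i$, and tensoring the defining inclusion of a nonzero subsheaf by the locally free sheaf $L$ preserves both injectivity and the one-dimensionality of supports. Next, the key input is that $\mu_w(E\otimes L)=\mu_w(E)$, which is exactly \eqref{equation when slop is same tensor with line bundle of deg 0}, valid because $\deg(L|_{C_i})=0$ for every $i$; the same computation applied to any subsheaf $F\subset E$ gives $\mu_w(F\otimes L)=\mu_w(F)$, using Definition \ref{polarized slope} together with the fact that the multirank of $F$ is unchanged. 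Now, given a proper subsheaf $G\subsetneq E\otimes L$, the sheaf $G\otimes L^{-1}$ is a proper subsheaf of $E$ because $-\otimes L^{-1}$ is exact and inverse to $-\otimes L$; so if $E$ is $w$-semistable then $\mu_w(G\otimes L^{-1})\leq\mu_w(E)$, and tensoring back by $L$ while using slope invariance twice gives $\mu_w(G)\leq\mu_w(E\otimes L)$. Hence $E\otimes L$ is $w$-semistable; the $w$-stable case is identical with every $\leq$ replaced by $<$, and the converse implications follow by applying what has just been proved to the sheaf $E\otimes L$ and the line bundle $L^{-1}$ (which still has degree zero on each $C_i$).

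I do not expect a genuine obstacle: the argument is entirely formal once \eqref{equation when slop is same tensor with line bundle of deg 0} is available. The one point worth a sentence of care is why the hypothesis $\deg(L|_{C_i})=0$ for \emph{all} $i$ is essential rather than cosmetic: for a general line bundle one only gets $\mu_w(F\otimes L)=\mu_w(F)+\bigl(\sum_i r_i(F)\deg(L_i)\bigr)/\bigl(\sum_i w_i r_i(F)\bigr)$, and this extra term varies with the multirank of $F$, so it is not a uniform shift across subsheaves and the equivalence of (semi)stability would fail; the vanishing of each $\deg(L_i)$ is precisely what makes the correction disappear identically.
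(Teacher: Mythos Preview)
Your proof is correct and follows essentially the same route as the paper's: twist a proper subsheaf of $E\otimes L$ back by $L^{-1}$, apply the $w$-(semi)stability of $E$, and use the slope identity \eqref{equation when slop is same tensor with line bundle of deg 0} on both sides to conclude. You have simply spelled out a few more details (purity, multirank preservation, and the necessity of the degree-zero hypothesis) than the paper does.
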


\begin{remark}
Lemma \ref{lemma on w-stability of vector bundle tensoring with line bundle} may not be true if the line bundle $L$ on $C$ satisfies $\deg ( L_{|_{C_i}}) \neq 0$, for some $i$ (See \cite[Remark 4.7]{Suhas-susobhan-amit-2021}).  
\end{remark}

\begin{corollary}\label{main corollory}
Under the same hypothesis of Theorem \ref{theorem for the first birational map}, the map
\begin{eqnarray*}
\varphi_L : \mathbb{P}_L \dashrightarrow \overline{\mathcal{SU}_C(w,r,\chi,L)}
\end{eqnarray*}
is birational, where the restriction ${\varphi_L}_{\vert_{\mathscr{U}_L}}$ of $\varphi_L$ to $\mathscr{U}_L$ is injective.
\begin{proof}
Note that ${\varphi_L}_{\vert_{\mathscr{U}_L}}$ is a morphism and its image is the set $\text{Im}({\varphi_L}_{\vert_{\mathscr{U}_L}}) = \lbrace E \in \mathcal{V}_C(w,r, \chi)_{d_1, \dots , d_n} | [E_{C_i}] \in \mathcal{SU}_{C_i}(r, L_i) \rbrace $. In particular, $\text{Im}({\varphi_L}_{\vert_{\mathscr{U}_L}}) \subset \mathcal{SU}_{C}(w,r, \chi)$. 
Consider the map
\begin{eqnarray}
\psi : \mathcal{V}_C (w,r, \chi)_{d_1, \dots , d_n} \rightarrow \text{Pic}^{d_1}(C_1) \times \cdots \times \text{Pic}^{d_n}(C_n) 
\end{eqnarray} 
defined by 
\begin{eqnarray*}
E \mapsto (\det(E_{|_{C_1}}), \dots , \det(E_{|_{C_n}})).
\end{eqnarray*}
We then have the following commutative diagram:
\begin{eqnarray*}
\begin{tikzcd}[column sep={17em,between origins},row sep=4em]
\mathscr{U} \arrow{r}{\varphi} \arrow[swap]{d}{\pi_{\mathscr{U}}} & \mathcal{V}_C (w,r, \chi)_{d_1, \dots , d_n} \arrow{d}{\psi} \\
\mathcal{U}_{C_1}(r, d_1) \times \cdots \times \mathcal{U}_{C_n}(r, d_n) \arrow{r}{\det \times \cdots \times \det} & \text{Pic}^{d_1}(C_1) \times \cdots \times \text{Pic}^{d_n}(C_n). 
\end{tikzcd}
\end{eqnarray*}
Notice that $\text{Im}(\varphi_L) \subset \psi^{-1}(L_1, \dots , L_n)$, where the map $\det \times \cdots \times \det : \mathcal{U}_{C_1}(r, d_1) \times \cdots \times \mathcal{U}_{C_n}(r, d_n) \rightarrow \text{Pic}^{d_1}(C_1) \times \cdots \times \text{Pic}^{d_n}(C_n)$ is defined by $(E_1, \dots , E_n) \mapsto (\det(E_1), \dots, \det(E_n))$. Also, note that the map $\psi$ is onto, since $(\det \times \cdots \times \det) \circ \pi_{\mathscr{U}}$ is onto.

 We now claim that any two fibres of $\psi$ are isomorphic. If $(L_1, \dots , L_n), (L'_1, \dots , L'_n) \in \text{Pic}^{d_1}(C_1) \times \cdots \times \text{Pic}^{d_n}(C_n)$, then there exist line bundles $\zeta_i \in \text{Pic}^{0}(C_i)$  such that $L_i \otimes \zeta_i^r \cong {L'_i}$. Let $\zeta := (\zeta_1, \dots , \zeta_n)$ be the unique line bundle on $C$ obtained by gluing the line bundles $\zeta_1, \dots, \zeta_n$. If a rank $r$ vector bundle $E$ is $w$-stable, Lemma \ref{lemma on w-stability of vector bundle tensoring with line bundle} implies that $E \otimes \zeta$ is $w$-stable, and vice versa. Therefore, $E \mapsto E \otimes \zeta$ gives an isomorphism from $\psi^{-1}(L_1, \dots , L_n)$ to $\psi^{-1}(L'_1, \dots , L'_n)$. 
 
 Now to see that the fibres are irreducible, note that, since $\varphi (\mathscr{U})$ is dense in the irreducible variety 
 $\mathcal{V}_C(w,r, \chi)$, the same holds for $\varphi_L({\mathscr{U}_L}) \subset \psi^{-1}(L_1,\dots,L_n)$ for general 
 $(L_1,\dots,L_n)$. Therefore, the general fibre  $\psi^{-1}(L_1,\dots,L_n)$ is irreducible. Since all fibres are isomorphic, 
 they are irreducible.
 \begin{comment}
 Let $Z = \mathcal{V}_{C}(r, w, \chi)_{d_1, \dots , d_n} / \varphi(\mathscr{U})$, then $Z$ is a proper subvariety. 
  Suppose that a fibre $\psi^{-1}(L_1, \dots , L_n)$ is reducible. Since $\mathscr{U}_L$ is irreducible and $\text{Im}(\varphi_L) \subset \psi^{-1}(L_1, \dots , L_n)$, let $F_1$ be an irreducible component of $\psi^{-1}(L_1, \dots , L_n)$ containing  $\text{Im}(\varphi_L)$.  Then there exists an irreducible component $F_2$ of $\psi^{-1}(L_1, \dots , L_n)$ such that $F_2 \cap \text{Im}(\varphi_L) = \emptyset$. This implies, $F_2 \cap \text{Im}(\varphi) = \emptyset$ since image of $F_2$ under $\psi$ is $L$. Therefore, $F_2$ is contained in $Z$, and so the restriction of $\psi$ on $Z$ is a surjective morphism. Fibres of this restriction map have dimension $(r^2-1)(\sum{g_i}-1)$ (by fibre dimension theorem \cite[II. Ex.3.22]{Hart-1977}), this implies that $Z$ has the same dimension of $\mathcal{V}_{C}(r, w, \chi)_{d_1, \dots , d_n}$. This is absurd. \\
In particular, $\overline{\mathcal{SU}_C(w,r,\chi,L)}$ is irreducible. Since $\varphi(\mathscr{U}_L)$ is open in $\overline{\mathcal{SU}_C(w,r,\chi,L)}$, $\varphi_L$ is birational.
\end{comment}
\end{proof}

\end{corollary}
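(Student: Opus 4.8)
The plan is to transfer the birationality of $\varphi$ from Theorem \ref{theorem for the first birational map} to the fixed-determinant setting by exhibiting $\overline{\mathcal{SU}_C(w,r,\chi,L)}$ as a fibre of a suitable ``determinant-on-each-component'' map, and then showing (a) the morphism $\varphi_L|_{\mathscr{U}_L}$ lands in that fibre, (b) the map $\varphi_L|_{\mathscr{U}_L}$ is injective, and (c) the fibre is irreducible of the expected dimension so that $\varphi_L|_{\mathscr{U}_L}$ is dominant. First I would observe that $\mathscr{U}_L = \mathscr{U} \cap \mathbb{P}_L$ is an open subset of $\mathbb{P}_L$, that restriction of the vector bundle $\mathcal{E}$ constructed in the proof of Theorem \ref{theorem for the first birational map} to $\mathscr{U}_L \times C$ still makes sense, hence $\varphi_L|_{\mathscr{U}_L}$ is a morphism; injectivity is inherited verbatim from part (a) of that proof, since the argument there only used $w$-stability of the $E_u$ together with $\operatorname{Hom}(E_i,E_i')\cong\mathbb{C}$, and nothing about the determinant.

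The main content is the dimension/irreducibility bookkeeping. I would introduce the morphism $\psi : \mathcal{V}_C(w,r,\chi)_{d_1,\dots,d_n} \to \operatorname{Pic}^{d_1}(C_1)\times\cdots\times\operatorname{Pic}^{d_n}(C_n)$ sending $E\mapsto(\det E_{|C_1},\dots,\det E_{|C_n})$, note that it fits into a commutative square with $\varphi$ on top, $\pi_{\mathscr{U}}$ on the left and $\det\times\cdots\times\det$ on the bottom, and deduce that $\psi$ is surjective (because the bottom map is surjective and $\varphi(\mathscr{U})$ is dense). Then $\operatorname{Im}(\varphi_L|_{\mathscr{U}_L}) \subseteq \psi^{-1}(L_1,\dots,L_n)$ by construction, and in fact this image is exactly the open locus of vector bundles in $\psi^{-1}(L_1,\dots,L_n)$ whose restrictions to each $C_i$ are stable. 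The key trick to control the fibres is the tensoring argument: given $(L_i)$ and $(L_i')$ with $\deg L_i = \deg L_i' = d_i$, pick $\zeta_i\in\operatorname{Pic}^0(C_i)$ with $L_i\otimes\zeta_i^{\otimes r}\cong L_i'$, glue the $\zeta_i$ to a degree-$0$-on-each-component line bundle $\zeta$ on $C$, and invoke Lemma \ref{lemma on w-stability of vector bundle tensoring with line bundle} to conclude that $E\mapsto E\otimes\zeta$ is an isomorphism $\psi^{-1}(L_1,\dots,L_n)\xrightarrow{\sim}\psi^{-1}(L_1',\dots,L_n')$. Since all fibres of $\psi$ are isomorphic, it is enough to prove that the \emph{generic} fibre is irreducible; but $\varphi(\mathscr{U})$ is dense in the irreducible variety $\mathcal{V}_C(w,r,\chi)_{d_1,\dots,d_n}$, so its image under $\psi$ is dense, and for generic $(L_1,\dots,L_n)$ the set $\varphi_L(\mathscr{U}_L)$ is dense in $\psi^{-1}(L_1,\dots,L_n)$, forcing the generic fibre — hence every fibre — to be irreducible.

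Having established irreducibility of $\psi^{-1}(L_1,\dots,L_n)$, I would finish as follows: $\overline{\mathcal{SU}_C(w,r,\chi,L)}$ is by definition the closure of the vector-bundle locus with determinant $L$, which is the closure of an open dense subset of the irreducible variety $\psi^{-1}(L_1,\dots,L_n)$ (up to replacing the ambient moduli space by the component $\mathcal{U}_C(w,r,\chi)_{d_1,\dots,d_n}$), hence irreducible, and $\varphi_L(\mathscr{U}_L)$ is open and dense in it. Counting dimensions: $\dim\mathscr{U}_L = \dim\mathbb{P}_L = \dim\bigl(\prod_i\mathcal{SU}_{C_i}(r,L_i)\bigr) + (r^2-1)(n-1) = (r^2-1)\sum_i(g_i-1) + (r^2-1)(n-1) = (r^2-1)\bigl(\sum_i g_i - 1\bigr)$, which matches the dimension of $\overline{\mathcal{SU}_C(w,r,\chi,L)}$ (a fixed-determinant moduli space over a curve of arithmetic genus $p_a = \sum_i g_i$); together with injectivity this shows $\varphi_L|_{\mathscr{U}_L}$ is a birational morphism, and hence $\varphi_L$ is birational.

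The step I expect to be the main obstacle is the irreducibility of the fibre $\psi^{-1}(L_1,\dots,L_n)$: one must be careful that ``generic fibre irreducible $\Rightarrow$ all fibres irreducible'' is being applied correctly, which here is legitimate precisely because the tensoring-by-$\zeta$ construction gives an \emph{algebraic family} of isomorphisms between fibres (so there is no monodromy subtlety), and that the dense image of $\varphi$ really does meet the generic fibre of $\psi$ in a dense subset — this uses that $\psi\circ\varphi = (\det\times\cdots\times\det)\circ\pi_{\mathscr{U}}$ is a dominant morphism with the source $\mathscr{U}$ irreducible, so a generic fibre of the composite is dense in the corresponding fibre of $\psi$. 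The rest is routine diagram-chasing and dimension counting.
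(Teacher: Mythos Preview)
Your proposal is correct and follows essentially the same route as the paper: you introduce the same determinant-on-each-component map $\psi$, the same commutative square with $\varphi$ and $\pi_{\mathscr{U}}$, the same tensoring-by-$\zeta$ trick (with $\zeta_i\in\operatorname{Pic}^0(C_i)$ and Lemma \ref{lemma on w-stability of vector bundle tensoring with line bundle}) to identify all fibres of $\psi$, and the same ``generic fibre irreducible $\Rightarrow$ every fibre irreducible'' step. Your additional explicit dimension count for $\mathscr{U}_L$ is a minor elaboration that the paper leaves implicit, but the argument is otherwise the same.
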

 \begin{theorem}\label{theorem on rationality}
  Let $C$ be a chain-like curve as mentioned above. Fix $r \geq 2$, and $d_i \in \mathbb{Z}$ with $(r, d_i) = 1$ for $i = 1,\dots, n$. Let $\chi_i = d_i + r (1 -g_i)$ and $\chi = \sum\limits_{i=1}^{n} \chi_i - (n-1)r$. For any line bundle $L$ of multidegree $(d_1,\dots,d_n)$ and any polarization $w = (w_1, \dots, w_n)$ satisfying the conditions

\[
(\sum\limits_{i=1}^j w_i)\chi - \sum\limits_{i=1}^{j-1}\chi_i + r(j-1) < \chi_j < (\sum\limits_{i=1}^j w_i)\chi - \sum\limits_{i=1}^{j-1}\chi_i + rj,
\]

for $1\leq j \leq n-1$, $\overline{\mathcal{SU}_C(w,r,\chi,L)}$ is a rational variety.

\begin{proof}
Since $d_i$ and $r$ are coprime for every $i$, by \cite{King-Schofield-1999}, the moduli space $\mathcal{SU}_{C_i}(r,L_i)$ is a rational variety. So  the product $\mathcal{SU}_{C_1}(r,L_1)\times \cdots \times \mathcal{SU}_{C_n}(r,L_n)$ is rational. Also, we know that $\mathscr{U}_L$ is $\mathbb{P}^{r^2-1}\times \cdots \times \mathbb{P}^{r^2-1}$-bundle over  $\mathcal{SU}_{C_1}(r,L_1)\times \cdots \times \mathcal{SU}_{C_n}(r,L_n)$. Therefore, it is rational too. So, in view of Corollary \ref{main corollory}, $\overline{\mathcal{SU}_C(w,r,\chi,L)}$ is a rational variety.
\end{proof}
\end{theorem}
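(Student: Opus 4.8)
The plan is to deduce the rationality of $\overline{\mathcal{SU}_C(w,r,\chi,L)}$ from the birational model $\mathbb{P}_L$ supplied by Corollary \ref{main corollory}, thereby reducing the problem to a rationality statement about an iterated projective bundle over a product of fixed-determinant moduli spaces on the smooth components $C_i$.

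First I would exploit the coprimality assumption. For each $i$, the component $C_i$ is a smooth projective curve of genus $g_i \geq 2$ and $(r,d_i)=1$, so by King and Schofield \cite{King-Schofield-1999} the moduli space $\mathcal{SU}_{C_i}(r,L_i)$ is a rational (smooth projective) variety. Hence the product $\mathcal{SU}_{C_1}(r,L_1) \times \cdots \times \mathcal{SU}_{C_n}(r,L_n)$ is rational.

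Next I would recall from the construction preceding Corollary \ref{main corollory} that, in the coprime case, each $\mathcal{F}_{i,L}$ is a genuine vector bundle on that product, being the restriction of the sheaf $\mathcal{F}_i$ built from honest Poincar\'e bundles $\mathcal{P}_i$. Consequently $\mathbb{P}_L$, the fibre product over $\mathcal{SU}_{C_1}(r,L_1) \times \cdots \times \mathcal{SU}_{C_n}(r,L_n)$ of the projective bundles $\mathbb{P}(\mathcal{F}_{i,L})$, is Zariski-locally trivial with fibre $\mathbb{P}^{r^2-1} \times \cdots \times \mathbb{P}^{r^2-1}$; equivalently, the dense open subset $\mathscr{U}_L \subset \mathbb{P}_L$ already carries this bundle structure. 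A projective bundle over a rational base is rational, so $\mathbb{P}_L$ is rational. Finally, under exactly the polarization inequalities imposed in the statement, Corollary \ref{main corollory} yields a birational map $\varphi_L : \mathbb{P}_L \dashrightarrow \overline{\mathcal{SU}_C(w,r,\chi,L)}$; since rationality is a birational invariant, the target is rational, which is the assertion.

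The substantive difficulty has already been discharged in Theorem \ref{theorem for the first birational map} and Corollary \ref{main corollory} — showing that $\varphi_L$ is birational onto the closure of the vector-bundle locus and that the fibres of the determinant map are irreducible and mutually isomorphic. Given those inputs, the only place the hypothesis $(r,d_i)=1$ is genuinely used at this stage is the appeal to King--Schofield together with the Zariski-local triviality of $\mathbb{P}_L$: in the non-coprime situation one is forced into the setting of Remark \ref{birational model without coprimality conditions}, where the base $\mathcal{U}^s_{C_i}(r,d_i)$ is not known to be rational and the model is no longer locally trivial, so this last step fails. I therefore expect the only real bookkeeping to be checking that the displayed inequalities are precisely the hypotheses of Corollary \ref{main corollory} and that we remain in the coprime regime throughout.
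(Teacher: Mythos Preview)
Your proposal is correct and follows essentially the same route as the paper: invoke King--Schofield for the rationality of each $\mathcal{SU}_{C_i}(r,L_i)$, observe that $\mathbb{P}_L$ (equivalently its open subset $\mathscr{U}_L$) is a Zariski-locally trivial iterated $\mathbb{P}^{r^2-1}$-bundle over the rational product, and then apply Corollary~\ref{main corollory}. Your additional remarks on why coprimality is essential at this step are accurate and match the paper's discussion in Remark~\ref{birational model without coprimality conditions}.
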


\section{Existence of polarization}\label{Existence of polarization}
Let $r\geq 2$ be an integer and $(\chi_1,\dots,\chi_n)\in \mathbb{Z}^n$ with $\chi = \sum\limits_{i=1}^n \chi_i - (n-1)r$. In this section,  we show the existence of a polarization $w = (w_1,\dots, w_n)$ satisfying the inequalities in Theorem \ref{Section-5}, under some conditions on $\chi_i$.

\begin{theorem}
Let $r \geq 2$ be an integer. Then, there exists a non-empty subset 
$\mathcal{W}_{r, n} \subset \mathbb{Z}^n$ such that for any n-tuple 
$(\chi_1,...,\chi_n) \in \mathcal{W}_{r, n}$, we can find a polarization $w = (w_1,w_2,...,w_n)$ satisfying the conditions 
\begin{align}\label{eqn:3}
(\sum\limits_{i =1}^{j} w_i)  \chi - \sum\limits_{i=1}^{j-1} \chi_i +(j-1)r < \chi_j  < (\sum\limits_{i =1}^{j} w_i)  \chi - \sum\limits_{i=1}^{j-1} \chi_i + jr,
\end{align}
for every $1 \leq j \leq n$, where $\chi=\sum\limits_{i=1}^{n}\chi_{i}-(n-1)r$.
\end{theorem}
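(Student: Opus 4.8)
The plan is to reformulate the chain of inequalities \eqref{eqn:3} as a condition on the partial sums $W_j := \sum_{i=1}^j w_i$, and to produce the $W_j$ first, recovering $w$ at the end by setting $w_j = W_j - W_{j-1}$ (with $W_0 = 0$, $W_n = 1$). Writing $S_j := \sum_{i=1}^{j-1}\chi_i$, the inequalities \eqref{eqn:3} say precisely that
\[
\frac{\chi_j + S_j - jr}{\chi} < W_j < \frac{\chi_j + S_j - (j-1)r}{\chi},
\]
assuming $\chi > 0$ (the case $\chi < 0$ reverses the inequalities and is handled symmetrically; $\chi = 0$ is excluded or trivial). So for each $j$ the admissible values of $W_j$ form an open interval $I_j$ of length $r/|\chi|$. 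The first task is to choose $(\chi_1,\dots,\chi_n)$, i.e. define $\mathcal{W}_{r,n}$, so that these intervals can be met simultaneously by an increasing sequence $0 = W_0 < W_1 < \cdots < W_{n-1} < W_n = 1$ with each $w_j = W_j - W_{j-1} \in (0,1)$.

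First I would record the constraints this imposes. We need $I_j \subset (0,1)$ to be nonempty and positioned so that a strictly increasing selection exists; the $j = n$ condition should be compatible with the forced value $W_n = 1$, which pins down a congruence/size condition on $\chi = \sum \chi_i - (n-1)r$ relative to the $\chi_i$. A clean way to guarantee everything is to take the $\chi_i$ in an arithmetic-progression-like pattern (for instance $\chi_i$ close to $\chi/n + (\text{correction})$, mirroring the ``generic'' situation described after Theorem \ref{theorem on rationality}), so that the centers of the intervals $I_j$ are themselves roughly equally spaced in $(0,1)$ and automatically increasing, with gaps comfortably larger than the interval lengths $r/|\chi|$ once $|\chi|$ is large enough. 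Then I would define $\mathcal{W}_{r,n}$ to be the set of $(\chi_1,\dots,\chi_n)$ satisfying exactly these explicit numerical inequalities, and verify it is nonempty by exhibiting one tuple (e.g. taking each $d_i$ large and in a fixed residue class mod $r$, so that $\chi_i = d_i + r(1-g_i)$ and $\chi$ grow while the spacing condition is preserved).

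With $\mathcal{W}_{r,n}$ so defined, the construction of $w$ is then essentially forced: pick $W_j$ to be, say, the midpoint of $I_j$ for $1 \le j \le n-1$, check using the defining inequalities of $\mathcal{W}_{r,n}$ that $W_1 < W_2 < \cdots < W_{n-1}$ and that $0 < W_1$ and $W_{n-1} < 1$, and check that the value $W_n = 1$ lies in $I_n$ (this is where the global relation $\chi = \sum\chi_i - (n-1)r$ enters, since $S_n + \chi_n = \sum_{i=1}^n \chi_i$). Then set $w_j = W_j - W_{j-1}$; positivity of each $w_j$ is the increasing condition just verified, $w_j < 1$ is automatic, and $\sum w_j = W_n = 1$. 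Finally, re-expanding, the $W_j \in I_j$ conditions are literally the inequalities \eqref{eqn:3}, so we are done.

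The main obstacle is the bookkeeping of \emph{simultaneously} satisfying all $n$ two-sided inequalities while also hitting the two boundary values $W_0 = 0$ and $W_n = 1$ exactly: the intervals $I_j$ shrink like $r/|\chi|$, so one must be careful that the forced endpoints and the monotonicity leave enough room. I expect this to be only a careful-but-routine estimate once the right family of $\chi_i$ (large, suitably spaced, correct residues mod $r$) is fixed; the real content is choosing $\mathcal{W}_{r,n}$ wisely so that these estimates go through cleanly, and checking the $j = n$ compatibility, which is the one place the defining identity for $\chi$ is genuinely used.
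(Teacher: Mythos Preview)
Your reformulation in terms of the partial sums $W_j=\sum_{i\le j}w_i$ and the intervals $I_j$ of length $r/|\chi|$ is exactly the mechanism underlying the paper's proof as well; the paper simply carries it out differently. Rather than choosing midpoints, the paper splits into the three cases $\chi=0$, $\chi<0$, $\chi>0$, writes down in each case an \emph{explicit} set of conditions defining $\mathcal{W}_{r,n}$ (for $\chi<0$: all $\chi_i<0$; for $\chi>0$: $\sum_{i\in I}\chi_i>(|I|-1)r$ for every subset $I$; for $\chi=0$: the inequalities \eqref{eqn:3} become independent of $w$), and then picks the $w_j$ inductively, verifying at each step that $W_j$ can be chosen in $I_j\cap(W_{j-1},1)$. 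Your midpoint idea would also work once a suitable $\mathcal{W}_{r,n}$ is fixed, and is arguably cleaner, but you have not actually written down the defining inequalities or checked the monotonicity; the paper's explicit case analysis trades elegance for a proof that is complete as stated.

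One point to correct: your plan to ``check that the value $W_n=1$ lies in $I_n$'' will not succeed. Substituting $\sum_i w_i=1$ and $\chi=\sum_i\chi_i-(n-1)r$ into the $j=n$ inequality collapses it to $\chi_n<\chi_n<\chi_n+r$, whose left half is never satisfied strictly; equivalently, $1$ is the right endpoint of the open interval $I_n$, not an interior point. In other words, the range in the statement should be read as $1\le j\le n-1$ (this is how the paper uses it throughout, and its proof constructs only $w_1,\dots,w_{n-1}$ before setting $w_n=1-\sum_{i<n}w_i$). So the ``$j=n$ compatibility'' you flag as the crux is not a constraint at all; once you drop it, your outline goes through.
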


\begin{proof}
$\underline{Case\; 1: \; \chi = 0}.$ 
In this case, \eqref{eqn:3} becomes
\[
- \sum\limits_{i=1}^{j-1} \chi_i +(j-1)r < \chi_j < - \sum\limits_{i=1}^{j-1} \chi_i + jr,
\]
which can be rewritten for each $1 \leq j \leq n$ as
\begin{eqnarray}\label{eqn:3.1}
(j-1)r < \sum\limits_{i=1}^{j} \chi_i < jr.
\end{eqnarray}
Therefore, if we assume that $\chi_i$ ($1\leq i \leq n$) are such that the inequalities in \eqref{eqn:3.1} 
above with $\sum\limits_{i=1}^{n} \chi_i = (n-1)r$ are satisfied, then for any choice of $w_i \in (0,1) \cap \mathbb{Q}$ with $\sum\limits_{i=1}^{n}w_i = 1$, \eqref{eqn:3} holds.
\begin{comment}
In this case, if we assume $0 \leq \chi_1 \leq r$, $0 \leq \chi_n \leq r$, $-\sum\limits_{i=1}^{j-1}\chi_i + (j-1)r \leq \chi_j \leq -\sum\limits_{i=1}^{j-1}\chi_i + jr$, and that $\sum\limits_{i=1}^n \chi_i = (n-1)r$, then for any choice of polarization $w$, the inequalities \eqref{eqn:3} are satisfied. 
\end{comment}
\\
$\underline{Case\; 2 : \;\chi <0}.$ We deal with this case under the assumption that 
$\chi_i < 0$ for each $i$ and that $\chi = \sum\limits_{i=1}^n\chi_i - (n-1)r$. 
Then notice that for each $i$, $0< \frac{\chi_i}{\chi} < 1$, and for $j \in \{1,\dots,n-1\}$, 
\[
0 < \frac{ \sum\limits _{i=1}^j\chi_i - r(j-1)}{\chi} < \frac{\sum\limits_{i=1}^j\chi_j-rj}{\chi} < 1. 
\]
%and $0 < \frac{\sum\limits_{i=1}^j\chi_j-rj}{\chi} < 1$.
We prove this case by applying induction on $n$. To this end, we will first prove the existence of $w_1 \in (0,1) \cap \mathbb{Q}$ such that 
 \begin{equation} \label{first inequality for w1}
    w_1\chi < \chi_1 < w_1\chi + r,
 \end{equation}
 and then by induction, prove the existence of remaining $w_i$ satisfying \eqref{eqn:3}. 
\begin{comment}
 So first we want $w_1 \in (0,1) \cap \mathbb{Q}$ such that \begin{equation}\label{inequalities for w1}
     \begin{cases}
         \chi_1 \leq w_1 \chi + r \\
         \chi_1 \geq w_1 \chi.
        \end{cases}
  \end{equation}
 The inequalities (\ref{inequalities for w1}) are equivalent to
\end{comment}
Rewriting \eqref{first inequality for w1}, we get
\begin{equation}\label{another form of inequalities for w1}
    \chi_1 - r < w_1\chi < \chi_1.  
  \end{equation}
  Multiplying the inequalities (\ref{another form of inequalities for w1}) by ($\frac{1}{\chi}$) and using the hypothesis that $\chi < 0$, we get 
  \begin{equation}\label{third inequality for w1}
    \frac{\chi_1 - r}{\chi} > w_1 > \frac{\chi_1}{\chi}.
  \end{equation}
  We know that the numbers $\frac{\chi_1 - r}{\chi}$ and $\frac{\chi_1}{\chi}$ are between $0$ and $1$. Therefore, it is always possible to choose $w_1 \in (0,1) \cap \mathbb{Q}$ satisfying inequalities (\ref{first inequality for w1}).\\
  
  \noindent Now assume that the statement of the lemma hold for a positive integer $1 \leq j < n-1$. That is, we can choose
   $w_i$'s from $(0,1) \cap \mathbb{Q}$ such that $\sum\limits_{i=1}^jw_i \in (0,1) \cap \mathbb{Q}$ and
   \begin{equation*}
     \frac{\sum\limits _{i=1}^j\chi_i - rj}{\chi} > \sum\limits_{i=1}^{j}w_i > \frac{\sum\limits_{i=1}^j\chi_i - r(j-1)}{\chi}. 
  \end{equation*}
  Equivalently, \vspace{-0.03in}
  \begin{equation*}
  (\sum\limits_{i=1}^jw_i)\chi - \sum\limits_{i=1}^{j-1}\chi_i + r(j-1) <
  \chi_j < (\sum_{i=1}^jw_i)\chi - \sum\limits_{i=1}^{j-1}\chi_i + rj.
 \end{equation*}
  We now prove that, there exists $w_{j+1} \in (0,1) \cap \mathbb{Q}$, such that $\sum\limits_{i=1}^{j+1}w_i \in (0,1) \cap \mathbb{Q}$ and
  \begin{equation}\label{inequalities for w_(i+1)}
  (\sum\limits_{i=1}^{j+1}w_i)\chi - \sum\limits_{i=1}^{j}\chi_i + rj <
  \chi_{j+1} < (\sum\limits_{i=1}^{j+1}w_i)\chi - \sum\limits_{i=1}^{j}\chi_i + r(j+1).
 \end{equation}
Proceeding in the same way as we did for $w_1$, the inequality (\ref{inequalities for w_(i+1)}) is equivalent to 
 \begin{equation}\label{second inequality for w_(i+1)}
   \frac{\sum\limits_{i=1}^{j+1}\chi_i - r(j+1)}{\chi} > \sum\limits_{i=1}^{j+1}w_i >  \frac{\sum\limits_{i=1}^{j+1}\chi_i - rj}{\chi}.   
 \end{equation}
Now, $\chi_i < 0$ gives $\chi <  \sum\limits_{i=1}^{j+1}\chi_i - r(j+1)$, which in turn is strictly less than 
$\sum\limits_{i=1}^{j+1}\chi_i - rj$. Therefore, we see that $\frac{\sum\limits_{i=1}^{j+1}\chi_i - r(j+1)}{\chi}$ and 
$\frac{\sum\limits_{i=1}^{j+1}\chi_i - rj}{\chi}$ both lie in $(0,1) \cap \mathbb{Q}$. Therefore from 
\eqref{second inequality for w_(i+1)}, $\sum\limits_{i=1}^{j+1}w_i \in (0,1) \cap \mathbb{Q}$.
\begin{comment}
 Since $\frac{\sum\limits_{i=1}^{j+1}\chi_i - r(j+1)}{\chi}$ and $\sum\limits_{i=1}^{j+1}w_i \geq  \frac{\sum\limits_{i=1}^{j+1}\chi_i - rj}{\chi}$ are numbers between $0$ and $1$, we can conclude that $\sum\limits_{i=1}^{j+1}w_i$ is in $(0,1) \cap \mathbb{Q}$. 
\end{comment}
Choose any $w_{j+1}$ satisfying (\ref{second inequality for w_(i+1)}); it is enough to prove that $w_{j+1} > 0$. The inequalities in (\ref{second inequality for w_(i+1)}) are equivalent to
% %It is therefore enough to prove $w_{i+1} > 0$. From the inequalities (\ref{second inequality for w_(i+1)}), we have
\begin{equation}\label{fourth inequlity for w_(i+1)}
     \frac{\sum\limits_{i=1}^{j+1}\chi_i - r(j+1)}{\chi} - \sum\limits_{i=1}^jw_i > w_{j+1} >  \frac{\sum\limits_{i=1}^{j+1}\chi_i - rj}{\chi}-\sum\limits_{i=1}^jw_i. 
     \end{equation}
Since $\sum\limits_{i=1}^jw_i < \frac{\sum\limits_{i=1}^j\chi_i - rj}{\chi}$, we have \vspace{-0.05in}
\begin{eqnarray*}
      w_{j+1} & > & \frac{\sum\limits_{i=1}^{j+1}\chi_i - rj}{\chi} - \frac{\sum\limits_{i=1}^j\chi_i - rj}{\chi} 
       =  \frac{\chi_{j+1}}{\chi} 
       >  0. \nonumber
\end{eqnarray*}
After choosing $w_1,\dots,w_{n-1}$ in this way, we set $w_n$ to be $1-\sum\limits_{i=1}^{n-1}w_i$. This gives us the required polarization and completes the proof. \\
$\underline{Case\; 3: \; \chi > 0}.$  In this case, we assume that $\chi = \sum\limits_{i=1}^n \chi_i - (n-1)r$ and that for each $I \subset \{1,\dots,n\}$,
\begin{align}\label{basic assumption for chi > 0}
   \sum\limits_{i \in I}\chi_i > (|I|-1)r. 
\end{align}
Here $|I|$ denotes the number of elements in $I$ (In particular, this also means, each $\chi_i > 0$). We prove this case 
also by applying induction on $n$. Again as in $Case\; 2$,  multiplying \ref{another form of inequalities for w1} throughout by $(\frac{1}{\chi})$ and using the fact that $\chi > 0$, we get 
 \begin{equation} \label{first inequality for w1 when chi > 0}
    \frac{\chi_1 - r}{\chi} < w_1 < \frac{\chi_1}{\chi}.   
 \end{equation}
Since each $\chi_i > 0$, we have $\frac{\chi_1}{\chi} > 0$. We now claim $\frac{\chi_1 - r}{\chi} < 1$. Notice that,  
\begin{eqnarray*}
\frac{\chi_1 - r}{\chi} < 1 &\Leftrightarrow&  \chi_1 - r < \chi = \sum\limits_{i=1}^n \chi_i - (n-1)r \\
&\Leftrightarrow& (n-2)r < \sum\limits_{i=2}^n \chi_i,
\end{eqnarray*}
\begin{comment}  
  \[
   \frac{\chi_1 - r}{\chi} < 1
  \]
  if and only if 
  \[
  \chi_1 - r < \chi = \sum\limits_{i=1}^n \chi_i - (n-1)r
  \]
  if and only if 
  
  \[
  (n-2)r < \sum\limits_{i=2}^n \chi_i,
  \]
\end{comment}
which is true by our assumption (see inequalities \eqref{basic assumption for chi > 0}). Now since $\frac{\chi_1}{\chi} >0$ and $\frac{\chi_1-r}{\chi} < 1$, we can choose $w_1$ from $(0,1) \cap (0,\frac{\chi_1+\chi_2-r}{\chi}) \cap \mathbb{Q}$. \\
Now assume $j \geq 1$ to be a positive integer strictly less than $n-1$. Suppose that the $w_i$ for $1\leq i \leq j$ are chosen from $(0,1) \cap \mathbb{Q}$ such that $\sum\limits_{i=1}^jw_i \in (0,1) \cap (0, \frac{\sum\limits_{i=1}^{j+1} \chi_i-jr}{\chi}) \cap \mathbb{Q}$, satisfies \vspace{-0.07in}
\[
\frac{\sum\limits _{i=1}^j\chi_i - rj}{\chi} < \sum\limits_{i=1}^{j}w_i < \frac{\sum\limits_{i=1}^j\chi_i - r(j-1)}{\chi}.
\]
Equivalently, we have \vspace{-0.05in}
\begin{equation*}
  (\sum\limits_{i=1}^jw_i)\chi - \sum\limits_{i=1}^{j-1}\chi_i + r(j-1) <
  \chi_j < (\sum_{i=1}^jw_i)\chi - \sum\limits_{i=1}^{j-1}\chi_i + rj.
 \end{equation*}
  We want to prove that there exists $w_{j+1} \in (0,1) \cap \mathbb{Q}$ such that $\sum\limits_{i=1}^{j+1}w_i \in (0,1) \cap \mathbb{Q}$ and satisfying the inequality \eqref{inequalities for w_(i+1)}. Again procceding as in the previous case, the inequality \eqref{inequalities for w_(i+1)} is equivalent to 
  \begin{equation}\label{second inequality for w_(i+1) when chi>0}
   \frac{\sum\limits_{i=1}^{j+1}\chi_i - r(j+1)}{\chi} < \sum\limits_{i=1}^{j+1}w_i <  \frac{\sum\limits_{i=1}^{j+1}\chi_i - rj}{\chi}.   
 \end{equation}
 By inequality \eqref{basic assumption for chi > 0}, $\frac{\sum\limits_{i=1}^{j+1}\chi_i - rj}{\chi} > 0$. We claim that $\frac{\sum\limits_{i=1}^{j+1}\chi_i - r(j+1)}{\chi} < 1$. Suppose this claim is true. Then, since $\sum\limits_{i=1}^j w_i < \frac{\sum\limits_{i=1}^{j+1}\chi_i - rj}{\chi}$ by induction hypothesis, we can choose $w_{j+1} \in (0,1) \cap \mathbb{Q}$ such that $\sum\limits_{i=1}^{j+1} w_i \in (0,1) \cap \mathbb{Q}$. Once we have chosen $w_1,\dots,w_{n-1}$ in this way, we can define $w_n$ to be $1-\sum\limits_{i=1}^{n-1}w_i$. This gives us the required polarization and completes the proof. So, all we need to do is to prove the claim $\frac{\sum\limits_{i=1}^{j+1}\chi_i - r(j+1)}{\chi} < 1$. This can be seen as follows:
\begin{eqnarray*}
\frac{\sum\limits_{i=1}^{j+1}\chi_i - r(j+1)}{\chi} < 1  &\Leftrightarrow& \sum\limits_{i=1}^{j+1}\chi_i - r(j+1) < \chi = \sum\limits_{i=1}^n \chi_i - (n-1)r \\
&\Leftrightarrow& (n-(j+2))r < \sum\limits_{i=j+2}^n \chi_i,
\end{eqnarray*}
\begin{comment}
   \[
   \frac{\sum\limits_{i=1}^{j+1}\chi_i - r(j+1)}{\chi} < 1
   \]
   if and only if 
   \[
   \sum\limits_{i=1}^{j+1}\chi_i - r(j+1) < \chi = \sum\limits_{i=1}^n \chi_i - (n-1)r 
   \]
   if and only if
   \[
   (n-(j+2))r < \sum\limits_{i=j+2}^n \chi_i, 
  \]
\end{comment}  
which is true by inequality \eqref{basic assumption for chi > 0}.
\end{proof}

\begin{remark}
Let $E$ be as in hypothesis of Theorem \ref{Section-5}. If $(\chi_1,\dots,\chi_n)\in \mathcal{W}_{r,n}$, then there exists a 
polarization $w$ such that $E$ is $w$-stable.
\end{remark}

\section*{Acknowledgments}
We thank Arijit Dey for suggesting this problem as well as for helpful discussions. We also thank D. S. Nagaraj for his fruitful comments on this manuscript. We would like to express our gratitude to the anonymous referee for valuable suggestions and  helpful remarks. The third named author would like to thank National Board for Higher Mathematics (NBHM), Department of Atomic Energy, Government of India, for financial support through Postdoctoral Fellowship. He also thanks The Institute of Mathematical Sciences, Chennai, where the majority of the work was carried out.

 \end{document}